\numberwithin{equation}{section}
\newcommand{\qed}{\hfill \ensuremath{\Box}}
\def\XXint#1#2#3{{\setbox0=\hbox{$#1{#2#3}{\int}$}
\vcenter{\hbox{$#2#3$}}\kern-.5\wd0}}
\newcommand{\dbar}{\overline{\partial}}
\newcommand{\ddt}[1]{\frac{\partial #1}{\partial t}}
\newcommand{\ddbar}{\frac{\sqrt{-1}}{2\pi} \partial\dbar}
\begin{document}
\newcounter{remark}
\newcounter{theor}
\setcounter{remark}{0} \setcounter{theor}{1}
\newtheorem{claim}{Claim}
\newtheorem{theorem}{Theorem}[section]
\newtheorem{proposition}{Proposition}[section]
\newtheorem{lemma}{Lemma}[section]
\newtheorem{definition}{Definition}[section]
\newtheorem{conjecture}{Conjecture}[section]
\newtheorem{corollary}{Corollary}[section]
\newenvironment{proof}[1][Proof]{\begin{trivlist}
\item[\hskip \labelsep {\bfseries #1}]}{\end{trivlist}}
\newenvironment{remark}[1][Remark]{\addtocounter{remark}{1} \begin{trivlist}
\item[\hskip \labelsep {\bfseries #1
\thesection.\theremark}]}{\end{trivlist}}
\newenvironment{example}[1][Example]{\addtocounter{remark}{1} \begin{trivlist}
\item[\hskip \labelsep {\bfseries #1
\thesection.\theremark}]}{\end{trivlist}}
~

\begin{center}
{\large \bf
Degeneration of K\"ahler-Ricci solitons on Fano manifolds
\footnote{Work supported in part by
National Science Foundation grants DMS-0757372, DMS-0847524 and DMS-0905873.}}
\bigskip\bigskip

{D.H. Phong*, Jian Song$^\dagger$, and Jacob Sturm$^\ddagger$} \\

\bigskip
{}
\smallskip

\end{center}

\medskip

\begin{abstract}

We consider the space $\mathcal{KR}(n, F)$ of  K\"ahler-Ricci solitons on $n$-dimensional Fano manifolds with Futaki invariant bounded by $F$. We prove a partial $C^0$ estimate for $\mathcal{KR}(n,F)$ as a generalization of the recent work of Donaldson-Sun for Fano K\"ahler-Einstein manifolds. In particular, any sequence in $\mathcal{KR}(n,F)$ has a convergent subsequence in the Gromov-Hausdorff topology to a K\"ahler-Ricci soliton on a $\mathbb{Q}$-Fano variety with log terminal singularities.

\end{abstract}

\section{Introduction}

Let $X$ be a Fano manifold admitting a smooth K\"ahler-Ricci soliton, that is a metric $g_{i\bar j}$ satisfying the equation
$$
Ric(g) = g + L_V g.
$$
where $V$ is a holomorphic vector field, and $L_V$ is the Lie derivative along $V$.
The holomorphic vector field can be expressed in terms of the Ricci potential $u$, with
\begin{equation}
\label{soliton}
R_{i\bar j}= g_{i\bar j} -  u_{i\bar j}, ~~u_{ij}=  u_{\bar i \bar j}= 0, ~V^i = -g^{i\bar j}u_{\bar j}. 
\end{equation}
The Futaki invariant associated to $(X, g, V)$ is given by
$$
\mathcal{F}_X(V) = \int_X |\nabla u|^2 dV_g = \int_X |V|^2 dV_g \geq 0. 
$$

\begin{definition}
Let $\mathcal{KR}(n, F)$ be the set of K\"ahler-Ricci solitons $(X, g)$ with $$\dim X = n, ~Ric(g) = g + L_V g,~  \mathcal{F}_X (V) \leq F.$$
\end{definition}

The main result is a partial $C^0$ estimate for K\"ahler-Ricci solitons.  Let $(X, g) \in \mathcal{KR}(n, F)$ and $\omega_g$ be the K\"ahler form for $g$. Let $h$ be a hermitian metric on $K_X^{-1}$ with $Ric(h) = \omega_g$, which is unique up to a multiplicative normalization. We define the $L^2$-inner product on $H^0(X, K_X^{-k})$ by 
$$\langle s, s' \rangle = k^n \int_X |s|_{h^k} ^2 \omega_g^n$$ for any $s, s' \in H^0(X, K_X^{-k})$.
Let $\{s_j\}_{j=1}^{N_k}$ be an orthonormal basis in $H^0(X, K_X^{-k})$ with respect to $\langle , \rangle$. Then the Bergman kernel $\rho_{X, k}$ is defined to be 
\begin{equation}
\rho_{X, k} = \sum_j |s_j|^2_{h^k}.
\end{equation}
The Bergman kernel $\rho_{X,k}$ is independent of the normalization of $h$. The partial $C^0$-estimate introduced and proved for smooth Fano surfaces with K\"ahler-Einstein metrics in \cite{T1}, involves a uniform lower bound for the Bergman kernel $\rho_{X, k}$.

\begin{theorem} \label{parc0} There exist $k(n, F) \in \mathbb{Z}^+$ and $\epsilon(n, F)>0$ such that for any $(X, g)  \in \mathcal{KR}(n, F)$, the Bergman kernel $\rho_{X,k} $ of $H^0(X, K_X^{-k})$ is uniformly bounded below by $\epsilon$, i.e., 
\begin{equation}
\inf_{z\in X} \rho_{X,k} (z) \geq \epsilon.
\end{equation}

\end{theorem}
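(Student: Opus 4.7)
The plan is to adapt the strategy of Donaldson--Sun to the soliton setting, in which the Einstein equation is replaced by $\ric(g) = g + L_V g$ with $V$ the soliton vector field. The first block of work is purely analytic: I would establish the a priori estimates that let Cheeger--Colding--Tian theory apply to the family $\mathcal{KR}(n,F)$. The crucial inputs are the Perelman-type estimates extended to the soliton case (by Tian--Zhu, Zhang and others): under the bound $\mathcal{F}_X(V)\leq F$, the Ricci potential $u$ satisfies a uniform $C^0$ bound, $|\nabla u|$ is bounded, the diameter of $(X,g)$ is bounded, and non-collapsing of volumes holds. Because $L_V g = -u_{i\bar j} dz^i d\bar z^j$ when written in the convention of \eqref{soliton}, an $L^\infty$ bound on $u$ together with the soliton equation yields a uniform \emph{lower} bound on $\ric(g)$, which is what is needed to invoke Cheeger--Colding machinery.

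With these estimates in hand, the second step is Gromov--Hausdorff compactness. Any sequence $(X_i,g_i)\in \mathcal{KR}(n,F)$ subconverges to a compact metric length space $(X_\infty,d_\infty)$, which by the work of Cheeger--Colding--Tian carries a stratification into a smooth K\"ahler part and a singular set of real codimension at least four. The soliton vector fields $V_i$, having uniformly bounded $C^1$-norm in appropriate gauges, pass to a limiting holomorphic vector field $V_\infty$ on the regular part, and the soliton equation persists in the limit. One then needs to verify that $X_\infty$ admits the structure of a normal projective $\mathbb Q$-Fano variety and that tangent cones at singular points are metric cones which are themselves algebraic; the arguments of Donaldson--Sun, based on slicing theorems and the Almgren frequency function, carry over because they only use two-sided Ricci bounds and non-collapsing, not the Einstein equation itself.

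The third and most technical step is the actual production of a uniformly non-vanishing Bergman section at each point $z\in X$. The scheme is the standard one: pick $k$ large, and for a given $z$ find a point $\hat z$ in a regular part of a tangent cone at the image of $z$ in $X_\infty$ where an explicit holomorphic section of the local model (a cone on a Fano orbifold) has controlled peak behavior. Pull this back via a Cheeger--Colding diffeomorphism to an almost-holomorphic section on $X_i$ for large $i$, and correct it to a genuine element of $H^0(X_i,K_{X_i}^{-k})$ using H\"ormander's $L^2$ estimate, whose hypotheses on positivity and on the Bochner--Kodaira curvature term are again available from the soliton equation and $L^\infty$ control of $u$. A standard mean value/gradient estimate argument then yields the pointwise lower bound $\rho_{X,k}(z)\geq \epsilon$.

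I expect the main obstacle to lie in the third step, specifically in transferring Donaldson--Sun's construction of peak sections on tangent cones to the soliton category. The difficulty is twofold: one must control how the vector field $V_\infty$ interacts with the cone structure of tangent cones (ensuring that $V_\infty$ is tangent to the cone and homogeneous of the correct degree, so that it does not obstruct the algebraic structure), and one must show that the resulting local models at singular points support enough sections of the appropriate pluricanonical bundle to generate peaks. Everything else, including the Perelman estimates, Gromov--Hausdorff compactness, and the H\"ormander correction, should be a relatively mechanical extension of the K\"ahler--Einstein case once the bound $\mathcal{F}_X(V)\leq F$ is fed into the soliton Perelman estimates.
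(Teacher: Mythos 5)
Your overall architecture matches the paper's (Perelman-type estimates, Cheeger--Colding/Donaldson--Sun tangent-cone analysis, H\"ormander correction of an approximate peak section), but two steps that you treat as ``mechanical'' are precisely where the paper has to introduce new ideas, and as written your versions of those steps fail. First, the curvature input for Cheeger--Colding--Tian: you claim that the $L^\infty$ bound on $u$ together with the soliton equation gives a lower Ricci bound. It does not --- the soliton equation reads $R_{i\bar j}=g_{i\bar j}-u_{i\bar j}$, and an $L^\infty$ (or even $C^1$) bound on $u$ gives no control on the Hessian $u_{i\bar j}$, so $\ric(g)$ is a priori unbounded in both directions. Moreover, the codimension-$4$ structure of the singular set and the Donaldson--Sun slicing arguments that you invoke require \emph{two-sided} Ricci bounds, which you never produce. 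The paper's device (due to Z.~Zhang, used also by Tian--Zhang) is the conformal change $\tilde g=e^{-u/(n-1)}g$: in the formula for $\ric(\tilde g)$ the Hessian term $\nabla_i\nabla_j u$ cancels against the one in the soliton equation, leaving only terms controlled by $|u|$ and $|\nabla u|$, so $|\ric(\tilde g)|\leq C\tilde g$ with $\tilde g$ uniformly $C^1$-equivalent to $g$. All the regularity theory is run on $\tilde g$ and transferred back.

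Second, the H\"ormander step: you assert that the positivity needed in the Bochner--Kodaira term is ``available from the soliton equation and $L^\infty$ control of $u$.'' Again the obstruction is the uncontrolled $\ddbar u$ inside $\ric(g)$, which enters the curvature term of the $\dbar$-Laplacian on $K_X^{-k}$-valued $(0,1)$-forms and could destroy the spectral gap. The paper's resolution (Proposition \ref{l2est}(3)) is to work with the \emph{weighted} inner product $\int_X|\sigma|^2 e^{-u}\,dV$; the adjoint $\dbar_u^*$ then contributes an extra $u_{i\bar j}$ in the Bochner--Kodaira--Nakano identity that exactly cancels the $-u_{i\bar j}$ coming from $\ric(g)$, yielding $\Delta_{\dbar,u}\geq a(n,F)>0$ uniformly. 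A related issue appears in the gradient estimate for holomorphic sections (Proposition \ref{l2est}(2)), where the term $\langle\nabla s,\nabla s\rangle_{\ddbar u}$ in the Moser iteration must be handled by integration by parts rather than pointwise bounds. Without these two devices your proof does not close; with them, the rest of your outline (including your correct identification of the tangent-cone peak-section construction as the place where Donaldson--Sun's Section 3 is imported wholesale) is consistent with the paper.
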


The proof of Theorem \ref{parc0} relies on the arguments in \cite{DS} and \cite{Z, TZ}. A consequence of Theorem \ref{parc0} is the following compactness result, which is obtained by a suitable modification of the argument in \cite{DS}. 

\begin{theorem} \label{comp} Any sequence $(X_i, g_i) \in \mathcal{KR}(n, F)$, after passing to a subsequence, 
 converges in the Gromov-Hausdorff topology to a compact metric length space $(X_\infty, g_\infty)$ satisfying:

\begin{enumerate}

\item $X_\infty$ is a projective $\mathbb{Q}$-Fano variety with log terminal singularities. The singular set $\Sigma_{X_\infty}$ of $X_\infty$  is a subvariety of $X_\infty$ with  complex codimension no less than $2$;

\item $g_\infty$ is a K\"ahler current on $X_\infty$ with bounded local potentials.
Furthermore  $g_\infty$ is smooth on $X_\infty \setminus \Sigma_{X_\infty}$, and satisfyies the K\"ahler-Ricci soliton equation %
\begin{equation}
Ric(g_\infty) = g_\infty + L_{V_\infty} g_\infty,
\end{equation}
where $V_\infty$ is a holomorphic vector field on $X_\infty$.  The upper bound of $\|V_\infty\|_{L^\infty(X_\infty, g_\infty)}$ only depends on $n$ and $F$;

\item $g_i$ converges to $g_\infty$ in $C^\infty$ on $X_\infty \setminus \Sigma_{X_\infty}$.

\end{enumerate}

\end{theorem}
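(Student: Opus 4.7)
My plan is to follow the strategy of Donaldson--Sun for Fano K\"ahler--Einstein manifolds \cite{DS}, with Theorem \ref{parc0} as the key new input and extra care to track the holomorphic vector fields $V_i$. First I would assemble the uniform geometric estimates for K\"ahler--Ricci solitons with bounded Futaki invariant: Perelman- and Tian--Zhu-type estimates yield uniform bounds on the diameter, scalar curvature, and Ricci potential $u_i$ of $(X_i, g_i)$; combined with the Futaki bound $\int_{X_i} |V_i|^2\, dV_{g_i} \le F$ and the identities $(u_i)_{ij}=0$, they give uniform $L^\infty$ bounds on $V_i$ and $\nabla V_i$ depending only on $n$ and $F$. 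Hence $\mathrm{Ric}(g_i) \ge (1-C)g_i$ for some $C=C(n,F)$, volumes are uniformly bounded, and the manifolds are non-collapsed in the sense of Cheeger--Colding. Cheeger--Colding--Tian theory then yields a subsequential Gromov--Hausdorff limit $(X_\infty, g_\infty)$ with a regular--singular decomposition $X_\infty = \mathcal{R} \sqcup \Sigma_{X_\infty}$, where $\Sigma_{X_\infty}$ is closed of real Hausdorff codimension at least $4$, and smooth convergence $g_i \to g_\infty$ on $\mathcal{R}$.

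The second step is to algebraize the limit. The partial $C^0$ estimate, together with standard gradient bounds for Bergman kernels under bounded Ricci, shows that the Kodaira maps $\Phi_i: X_i \to \mathbb{CP}^{N_k-1}$ defined by an $L^2$-orthonormal basis of $H^0(X_i, K_{X_i}^{-k})$ are holomorphic embeddings with a uniform lower bound on the pull-back Fubini--Study form and a uniform upper bound on degree; moreover $N_k$ is a topological constant. Hence $\Phi_i(X_i)$ lies in a bounded piece of the appropriate component of the Hilbert scheme of $\mathbb{CP}^{N_k-1}$, and a subsequence converges to a projective subvariety $W$. The core Donaldson--Sun argument, using H\"ormander $L^2$-extension on metric tangent cones of $X_\infty$ together with a three-annuli scheme that glues local holomorphic approximations into global sections, then identifies $X_\infty$ homeomorphically with $W$, shows that $W$ is a normal irreducible projective $\mathbb{Q}$-Fano variety whose singular locus is the image of $\Sigma_{X_\infty}$ and has complex codimension at least $2$, and shows that the local potential of $\omega_\infty$ with respect to the restricted Fubini--Study form is bounded; log terminality of $W$ then follows from local integrability of $e^{-\varphi}$.

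The third step is to construct $V_\infty$ and verify the soliton equation. Each $V_i$ lifts to an infinitesimal automorphism of $(X_i, K_{X_i}^{-k})$ and therefore induces an element $A_i \in \mathfrak{sl}(N_k)$ via $\Phi_i$; the uniform $L^\infty$ bound on $V_i$ translates to a uniform bound on $A_i$, so after passing to a further subsequence $A_i \to A_\infty \in \mathfrak{sl}(N_k)$ with $A_\infty$ tangent to $W$, and the restriction defines a holomorphic vector field $V_\infty$ on $W \cong X_\infty$ with $\|V_\infty\|_{L^\infty(X_\infty, g_\infty)} \le C(n, F)$. Smooth convergence on $\mathcal{R}$ passes the soliton equation to the limit there, and the identity extends across $\Sigma_{X_\infty}$ in the current sense using the codimension-two bound on $\Sigma_{X_\infty}$ together with the bounded-potential regularity of $\omega_\infty$.

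I expect the principal obstacle to be the identification step--normality, $\mathbb{Q}$-Fano structure, and log terminal singularities of $W$. In \cite{DS} this rests on delicate $L^2$-extension arguments on tangent cones together with the three-annuli gluing scheme, and in the present soliton setting those H\"ormander estimates must be carried out stably under the twist $L_{V_i} g_i$ in the Ricci equation. The uniform control of $V_i$ from the first step is precisely what provides this stability; with it in hand, the rest of the Donaldson--Sun argument adapts with only notational changes from the K\"ahler--Einstein case.
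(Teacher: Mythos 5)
Your overall architecture (Perelman-type estimates, Cheeger--Colding structure theory, partial $C^0$ estimate, Hilbert-scheme convergence, passage of the soliton equation to the limit on the regular set) matches the paper's, but the first step contains a genuine gap that the paper is specifically built to avoid. You assert that the uniform bounds on $u_i$, $|\nabla u_i|$, $R(g_i)$ and the Futaki bound give uniform $L^\infty$ control of $\nabla V_i$ and hence $\mathrm{Ric}(g_i)\ge (1-C)g_i$. This does not follow: by the soliton equation $R_{i\bar j}=g_{i\bar j}-u_{i\bar j}$, bounding $\nabla V_i$ (equivalently the complex Hessian of $u_i$) is the same as bounding the Ricci curvature of $g_i$, and the Perelman/Tian--Zhu estimates only control $|\nabla u_i|$ and the trace $\Delta u_i=n-R$, not the full Hessian. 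Indeed, whether $\mathrm{Ric}(g)$ is uniformly bounded on $\overline{\mathcal{KR}(n,F)}$ is posed as an open question at the end of the paper's introduction. Without a uniform lower Ricci bound you cannot invoke Cheeger--Colding(--Tian) for $g_i$ as you do, so the regular--singular decomposition and codimension-$4$ bound are not yet justified.

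The paper's fix is Z.~Zhang's conformal transformation $\tilde g_i=e^{-\frac{1}{n-1}u_i}g_i$: in the conformal change formula for Ricci the combination $R_{ij}+\nabla_i\nabla_j u$ appears, which the soliton equation replaces by $g_{ij}$ plus terms controlled by $|\nabla u|$ and $\Delta u$; hence $\tilde g_i$ has two-sided Ricci bounds while remaining uniformly $C^1$-equivalent to $g_i$. Cheeger--Colding is applied to $\tilde g_i$ and the conclusions are transferred back. A related, smaller weakness is your treatment of the H\"ormander step: saying that ``uniform control of $V_i$ provides stability'' is not enough, again because the Hessian of $u_i$ is unbounded. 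The paper instead works with the weighted inner product $\int_X|\sigma|^2e^{-u}\,dV$, for which the Bochner--Kodaira identity produces the curvature term $g_{i\bar j}+u_{i\bar j}+\mathrm{Ric}_{i\bar j}=\frac{k+1}{k}(kg)_{i\bar j}$, cancelling the uncontrolled Hessian and yielding the spectral gap $\Delta_{\bar\partial,u}^{\sharp}\ge a(n,F)$. You would need to incorporate both devices for the argument to close.
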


The metric completion of $X_\infty\setminus \Sigma_{X_\infty}$ by $g_\infty$ coincides with the Gromov-Hausdorff limit $X_\infty$ itself as a compact metric length space, and the limiting holomorhpic vector field $V_\infty$ extends globally to $X_\infty$ since $X_\infty$ is normal. We also remark that $g_\infty$ is  bounded below by a multiple of the Fubini-Study metric by applying estimates similar to Schwarz lemma. We can now obtain a compactification of $\mathcal{KR}(n, F)$ in the Gromov-Hausdorff topology. 
\begin{definition}

Let $\overline{\mathcal{KR}(n, F)}$ be the closure of $\mathcal{KR}(n, F)$ defined by the set of all K\"ahler-Ricci soliton $(X_\infty, g_\infty)$ such that there exists a convergent sequence $(X_i, g_i) \in \mathcal{KR}(n, F)$ with $(X_\infty, g_\infty)$ being the limit in Theorem \ref{comp}. 

\end{definition}

Theorem \ref{comp} also implies certain algebraic boundedness for $\overline{\mathcal{KR}(n, F)}$.

\begin{corollary} There exist $m=m(n, F)\in \mathbb{Z}^+$, $C=C(n, F)>0$ and $\delta=\delta(n, F)>0$, such that for any $X\in \overline{\mathcal{KR}(n, F)}$, 
\begin{equation}
-mK_X~ is~Cartier, ~~ [K_X]^n\leq C, ~~discr(X)> -1+ \delta.
\end{equation}
Here $discr(X)$ is the discrepancy of $X$, defined by the equation (\ref{discrepancy}) below.

\end{corollary}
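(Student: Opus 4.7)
The plan is to extract all three bounds from the partial $C^0$ estimate of Theorem \ref{parc0} together with the Gromov-Hausdorff compactness statement of Theorem \ref{comp}. Fix $k = k(n,F)$ as in Theorem \ref{parc0}.

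\textbf{Cartier property.} Since $\rho_{X,k}(z) \geq \epsilon > 0$ for every $z \in X$, the line bundle $K_X^{-k}$ is base-point free on every $X \in \mathcal{KR}(n,F)$, and its sections define a morphism $\Phi_k : X \to \mathbb{P}^{N_k - 1}$. I would then invoke the Donaldson-Sun peak-section construction, applied uniformly in the family exactly as used in the proof of Theorem \ref{comp}: replacing $k$ by a uniform multiple $m = m(n,F)$, the map $\Phi_m$ separates points and separates tangent directions, hence is a holomorphic embedding for every smooth $X \in \mathcal{KR}(n,F)$. In particular $-mK_X$ is very ample, so Cartier. The $C^\infty$ convergence of sections on $X_\infty \setminus \Sigma_{X_\infty}$ in Theorem \ref{comp} lifts the conclusion to every $X_\infty \in \overline{\mathcal{KR}(n,F)}$, so $-mK_{X_\infty}$ is Cartier as well.

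\textbf{Volume bound.} From the Bergman kernel identity
\[
\frac{N_m}{m^n} = \int_X \rho_{X,m}\,\frac{\omega_g^n}{n!},
\]
combining the lower bound $\rho_{X,m} \geq \epsilon$ with a uniform upper bound $\rho_{X,m} \leq C(n,F)$ (available from the Perelman-Zhang-Tian-Zhu $C^\infty$ estimates for K\"ahler-Ricci solitons, together with the $L^\infty$ bound on $V$ furnished by Theorem \ref{comp}) shows that $N_m$ and $m^n\,\mathrm{Vol}(X,g)$ are comparable. A Bishop-Gromov comparison applied to the Ricci lower bound coming from the soliton equation then yields $\mathrm{Vol}(X,g) \leq C(n,F)$; since $[\omega_g] = 2\pi c_1(-K_X)$, this translates to $[-K_X]^n \leq C(n,F)$. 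The estimate passes to $\overline{\mathcal{KR}(n,F)}$ by semicontinuity of volume under Gromov-Hausdorff convergence.

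\textbf{Discrepancy bound.} This is the step I expect to be the main obstacle. Using the two-sided bound on $\rho_{X,m}$, one first checks that $\omega_g$ and $m^{-1}\Phi_m^*\omega_{FS}$ are uniformly equivalent --- their K\"ahler potentials differ by an $L^\infty$-bounded function with bound depending only on $n$ and $F$. Rewriting the K\"ahler-Ricci soliton equation as
\[
\omega_g^n = e^{-\varphi - u}\,\mu_{FS},
\]
where $\mu_{FS}$ is the smooth measure on the anticanonically embedded $X$ and both $\varphi$ and the Ricci potential $u$ are $L^\infty$-bounded uniformly in $(X,g)$, gives a uniform two-sided bound on the density of $\omega_g^n$ with respect to $\mu_{FS}$. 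Pulling back to a resolution $\pi : \tilde X \to X_\infty$ of a limit, this density transforms by $\prod |s_{E_i}|^{2 a_i}$ with $a_i$ the discrepancies along exceptional divisors $E_i$; the uniform $L^\infty$ upper bound on the density, combined with lower semi-continuity of log canonical thresholds in families (Demailly-Koll\'ar, refined by Berman-Boucksom) applied over the bounded Chow variety of images $\Phi_m(X)$, forces $a_i > -1 + \delta(n,F)$ uniformly. The principal technical difficulty lies in making the density comparison and the semi-continuity argument work uniformly across the compact family $\overline{\mathcal{KR}(n,F)}$, rather than separately for each limit; this is handled by exploiting the fact that Step 1 places every $X$ into a \emph{fixed} ambient $\mathbb{P}^{N-1}$ with $N \leq N(n,F)$.
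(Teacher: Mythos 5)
Your overall architecture matches what the paper intends: the Cartier statement comes from the uniform very-ampleness of $-kK_X$ furnished by the partial $C^0$ estimate and the Donaldson--Sun embedding argument, and the discrepancy bound comes from the integrability of $(s\wedge\bar s)^{-1/k}$ on a resolution, exactly as in Proposition \ref{logt}. On that last point you actually add something the paper leaves implicit: Proposition \ref{logt} as written only produces $L^{1+\epsilon}$-integrability ``for some $\epsilon>0$,'' and your observation that uniformity of $\delta$ must be extracted from the fact that all limits sit in a fixed $\mathbb{P}^{N_k}$ with bounded degree (via semicontinuity of log canonical thresholds over a bounded family) is the right way to close that loop.

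The genuine gap is in your volume bound. You invoke ``a Bishop--Gromov comparison applied to the Ricci lower bound coming from the soliton equation,'' but no such uniform Ricci lower bound is available: the soliton equation gives $R_{i\bar j}=g_{i\bar j}-u_{i\bar j}$, and while $|u|$, $|\nabla u|^2$ and $R$ are controlled by Proposition \ref{proposition}, the complex Hessian $u_{i\bar j}$ is not. This is precisely why Section 3 passes to the conformal metric $\tilde g=e^{-u/(n-1)}g$, for which the $\nabla_i\nabla_j u$ term cancels against $R_{ij}$ and the Ricci curvature becomes bounded (Lemma \ref{riccibd}); volume comparison for $g$ is then inherited from $\tilde g$ via the uniform $C^1$-equivalence (Corollary 3.1). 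If you want a metric route to $\mathrm{Vol}\leq C$, you must go through that corollary plus the diameter bound, not through Bishop--Gromov for $g$ directly. The paper's own route is far more elementary anyway: by Lemma 2.1 (boundedness of smooth Fano $n$-folds, Nadel and Koll\'ar--Miyaoka--Mori), $c_1^n(X)\leq c(n)$ for every smooth $X$ in the family, with no dependence on $F$ and no Bergman kernel input; the bound passes to $X_\infty\in\overline{\mathcal{KR}(n,F)}$ because the $X_i$ and $X_\infty$ are embedded in a fixed $\mathbb{P}^{N_k}$ by $|-kK|$ and the degree of the image is continuous under the convergence established in Section 6. Your Bergman-kernel identity is correct but is doing work that is either circular (bounding $N_m$ already presupposes control of $c_1^n$ via Riemann--Roch) or unnecessary.
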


Finally, we raise two natural questions closely related to the main results. 
\medskip

\noindent $\bullet$ Does there exist $F=F(n)>0$ such that for any K\"ahler-Ricci soliton $g_{i\bar j}$ on an $n$-dimensional  Fano manifold $X$ and $V$
the corresponding holomorphic vector field, the Futaki invariant is uniformly bounded by
\begin{equation}\label{fbound}
 \mathcal{F}_X(V) \leq F ? 
\end{equation} 
If this holds, the compactness result will hold for all K\"ahler-Ricci solitons on $n$-dimensional Fano manifolds.

 In general, (\ref{fbound}) does not hold for the space of K\"ahler-Ricci solitons on  Fano varieties  with log terminal singularities. For example, we can consider a weighted projective surface $X_m$ defined by the polytope $P_m$ as the convex hull of three points $(-1,-1)$, $(2/m, -1)$ and $(-1, m+1) $. The discrepancy  
of $X_m$ is given by $-1 + 2/m$. Hence  $discr(X_m)$ tends to $-1$ and $c_1(X_m)^2$ tends to $\infty$ as $m\rightarrow \infty$. There always exists a smooth orbifold K\"ahler-Ricci soliton $(g_m, V_m)$ on $X_m$ by \cite{WZ} and the Futaki invariant of $X_m$ tends to $\infty$ as $m\rightarrow \infty$. The compactness for singular K\"ahler-Ricci solitons on $\mathbb{Q}$-Fano varieties might still hold with bounds such as the Futaki invariant, $c_1^n$ and the discrepancy of the singularities. This seems to suggest that the Futaki invariant for K\"ahler-Ricci solitons are related to the boundedness problem for Fano varieties in birational geometry.  

\medskip

\noindent $\bullet$ For any $(X, g)\in \overline{\mathcal{KR}(n, F)}$, is the Ricci curvature of $g$ uniformly bounded on the regular part of $X$? This is equivalent to saying that the potential of the holomorphic vector field $V$ is a quasi-plurisubharmonic function with respect to a multiple of $g$.

\section{Geometric  estimates }

Since smooth Fano manifolds with fixed dimension can only have finitely many deformation types \cite{Na, KMM}, the intersection number  $[K_X]^n$ is uniformly bounded. 

\begin{lemma} For any $n>0$, there exists $c=c(n)>0$ such that for any Fano manifold $X$, 
\begin{equation}
c^{-1} \leq c_1^n (X)  \leq c. 
\end{equation}

\end{lemma}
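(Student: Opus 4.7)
The plan is to establish the upper and lower bounds separately, with the upper bound relying on the deep boundedness results already cited in the paper and the lower bound following from an elementary integrality argument.

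For the upper bound, I would invoke the Kollár--Miyaoka--Mori boundedness theorem (together with Nadel's work), cited above as \cite{Na, KMM}, which asserts that smooth Fano manifolds of a fixed dimension $n$ form a bounded family: there is a finite collection of flat projective families $\mathcal{X}_\alpha \to T_\alpha$ over finite-type bases such that every smooth Fano $n$-fold arises as a fiber. The intersection number $[-K_{\mathcal{X}_\alpha/T_\alpha}]^n$ is a constructible function on $T_\alpha$ which is locally constant along connected components of $T_\alpha$ (by flatness). Since there are finitely many deformation types, only finitely many values of $c_1^n$ occur, and the upper bound $c_1^n(X) \le c(n)$ follows immediately.

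For the lower bound, I would use the fact that on a smooth Fano manifold $X$ the canonical class $K_X$ is a Cartier divisor and $-K_X$ is ample. Hence $(-K_X)^n$ is the top self-intersection of an ample Cartier divisor on a smooth projective variety, which is a positive integer. In particular $c_1^n(X) = (-K_X)^n \ge 1$, giving $c^{-1} \le c_1^n(X)$ with $c = \max(c(n), 1)$.

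The main obstacle is really just the upper bound, which rests entirely on the cited boundedness results; once those are quoted as a black box, the argument is immediate. The lower bound is essentially tautological in the smooth case. It is worth noting that the lower bound becomes substantially more delicate once one passes to the $\mathbb{Q}$-Fano limits of Theorem \ref{comp}, where $-K_X$ is only $\mathbb{Q}$-Cartier; this is the content of the corollary that follows and is handled separately via the discrepancy bound.
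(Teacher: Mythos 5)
Your proof is correct and follows essentially the same route as the paper, which justifies the lemma solely by the finiteness of deformation types of smooth Fano $n$-folds due to Nadel and Koll\'ar--Miyaoka--Mori. Your additional observation that the lower bound already follows from the integrality and positivity of $(-K_X)^n$ for an ample Cartier divisor is a harmless (and correct) elaboration of what the paper leaves implicit.
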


We consider Perelman's entropy functional \cite{P} for a Fano manifold $(X, g)$ with the associated K\"ahler form $\omega_g \in c_1(X)$, which is defined by 
\begin{equation}
\mathcal{W}(g, f) =\frac{1}{V} \int_X (R + |\nabla f|^2 + f - n)  e^{-f} dV_g,
\end{equation}
where $V= c_1^n(X)$.
The $\mu$-functional is defined by 
\begin{equation}
\mu(g) = \inf_f\left\{ \mathcal{W}(g, f)~\left|~ \frac{1}{V} \int_X e^{-f} d V_g = 1 \right. \right\}. 
\end{equation}

For compact gradient shrinking solitons, we have the following well-known identities (cf. \cite{Cao})
\begin{equation}\label{soleq1}
R+\Delta u = n,
\end{equation}
\begin{equation}\label{soleq2}
R+|\nabla u|^2 = u + constant.
\end{equation}
In the case of K\"ahler-Ricci solitons, we have 
$$Ric(g)= g - \ddbar u,  ~u_{ij} = u_{\bar i \bar j} =0.$$
From now on, we always assume the following normalizing condition for $u$
\begin{equation}
\frac{1}{V} \int_X e^{-u} dV_g = 1, ~~V= c_1^n(X).
\end{equation}
Integrating (\ref{soleq1}) against $e^{-u}$, one can determine the constant in (\ref{soleq2})  after an integration by parts, 
\begin{equation}\label{soleqn3}
R+|\nabla u|^2 = u - \frac{1}{V} \int_X u e^{-u}dV_g + n.
\end{equation}

The following lemma is due to Tian-Zhang \cite{TZ}. Since the proof is short,
we  include it here for the convenience of the reader.

\begin{lemma}   There exists $A=A(n, F)>0$ such that for any $(X, g) \in \mathcal{KR}(n, F)$, 

\begin{equation}
\mu(g) \geq -A.
\end{equation}

\end{lemma}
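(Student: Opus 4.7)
The strategy is to identify $\mu(g)$ with the value of $\mathcal{W}$ at the soliton potential $u$ itself, and then convert the resulting expression into a bound involving the Futaki invariant and the volume. The key fact, due originally to Perelman and used in \cite{TZ}, is that on a compact gradient shrinking Kähler-Ricci soliton the normalized Ricci potential $u$ realizes the infimum defining $\mu(g)$: monotonicity of $\mathcal{W}$ along the Ricci flow, together with the fact that the soliton evolves by diffeomorphisms up to scaling, forces $\mathcal{W}$ to be constant along the flow, so $u$ is not merely a critical point of $\mathcal{W}$ but in fact a minimizer. Thus $\mu(g) = \mathcal{W}(g, u)$.

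Substituting the soliton identity (\ref{soleqn3}), namely $R + |\nabla u|^2 = u - \bar u + n$ with $\bar u := V^{-1} \int_X u\, e^{-u}\, dV_g$, into the integrand of $\mathcal{W}(g, u)$ collapses it to $2u - \bar u$, and the normalization $V^{-1}\int_X e^{-u}\, dV_g = 1$ gives
\[
\mu(g) \;=\; \frac{1}{V}\int_X (2u - \bar u)\, e^{-u}\, dV_g \;=\; \bar u.
\]
Integrating (\ref{soleqn3}) instead against the unweighted $dV_g$, and using $\int_X R\, dV_g = nV$ (which is immediate from (\ref{soleq1}) since $\int_X \Delta u\, dV_g = 0$), yields
\[
\mathcal{F}_X(V) \;=\; \int_X u\, dV_g \;-\; \bar u \cdot V.
\]

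The remaining step uses Jensen's inequality applied to the convex function $e^{-t}$ with the probability measure $V^{-1} dV_g$: the normalization $V^{-1}\int_X e^{-u}\, dV_g = 1$ forces $V^{-1}\int_X u\, dV_g \geq 0$. Combined with the assumption $\mathcal{F}_X(V) \leq F$ and the uniform lower bound $V = c_1^n(X) \geq c(n)^{-1}$ from the preceding lemma, this gives
\[
\mu(g) \;=\; \bar u \;=\; \frac{1}{V}\int_X u\, dV_g \;-\; \frac{\mathcal{F}_X(V)}{V} \;\geq\; -\frac{F}{V} \;\geq\; -c(n) F,
\]
so one may take $A := c(n) F$. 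The only nontrivial input is the identification $\mu(g) = \mathcal{W}(g, u)$; once this is granted, the rest is a short calculation from the soliton identities (\ref{soleq1})--(\ref{soleqn3}), Jensen's inequality, and the boundedness of $c_1^n(X)$.
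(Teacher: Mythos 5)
Your proof is correct and follows the paper's overall strategy: both arguments identify $\mu(g)=\mathcal{W}(g,u)$ with the weighted average $\frac{1}{V}\int_X u e^{-u}\,dV_g$ (the paper writes this quantity with an erroneous minus sign, but its subsequent steps, which bound $\int_X ue^{-u}\,dV_g$ from \emph{below}, confirm that your sign is the right one), and both then use the relation obtained by integrating (\ref{soleqn3}) against $dV_g$, namely $\int_X ue^{-u}\,dV_g = \int_X u\, dV_g - \mathcal{F}_X(V)$. Where you genuinely diverge is in the final lower bound on $\int_X u\,dV_g$: the paper doubles the inequality, splits the integral over the regions $\{u\le -1\}$ and $\{u\ge -1\}$, and invokes the elementary bound on $xe^{-x}$, arriving at $\mu(g)\ge -\bigl(2+\max_{x\ge -1}xe^{-x}\bigr)-2F/V$; you instead apply Jensen's inequality to the normalization $\frac{1}{V}\int_X e^{-u}\,dV_g=1$ to conclude $\int_X u\,dV_g\ge 0$ in one line, which is shorter and yields the cleaner constant $A=c(n)F$. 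Your explicit justification that $u$ realizes the infimum defining $\mu(g)$ (via monotonicity of $\mathcal{W}$ and self-similarity of the soliton flow) is also more careful than the paper, which simply asserts $\mu(g)=\mathcal{W}(g,u)$ as a ``straightforward calculation.'' Both routes deliver the lemma; yours trades the paper's pointwise case analysis for a convexity argument and, as a bonus, makes the dependence of $A$ on $F$ linear and explicit.
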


\begin{proof} Straightforward calculations using (\ref{soleqn3}) show that
\begin{equation}
\mu(g)= W(g, u) = \frac{1}{V} \int_X (R+|\nabla u|^2+u - n) e^{-u} dV_g
=-\frac{1}{V} \int_X u e^{-u} dV_g.
\end{equation}
It then suffices to show that $\frac{1}{V} \int_X u e^{-u} dV_g$ is uniformly bounded below.  
By (\ref{soleqn3}), 
$$\int_X u e^{-u} dV_g \geq   \int_X u dV_g - F  $$
or equivalently,
\begin{eqnarray*}
\int_X u e^{-u} dV_g &\geq&   \int_X 2u dV_g -  \int_X u e^{-u} dV_g- 2F  \\
&=& \int_{u\leq -1} u( 2 - e^{-u}) dV_g + \int_{u\geq -1}u (2 - e^{-u}) dV_g - 2F\\
&\geq& -( 2 + \max_{x \geq -1}xe^{-x} )V - 2F.
\end{eqnarray*} 

\qed
\end{proof}

The following lemma is well-known and due to Ivey \cite{I}. 
\begin{lemma} The scalar curvature $R$ is positive for all compact shrinking gradient solitons.
\end{lemma}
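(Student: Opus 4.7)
The plan is to apply the maximum principle to the standard elliptic identity satisfied by the scalar curvature of a shrinking gradient soliton. Differentiating the shrinking soliton equation $Ric_{ij} + \nabla_i\nabla_j u = g_{ij}$, contracting, and using the twice-contracted second Bianchi identity $\nabla^i Ric_{ij} = \tfrac{1}{2}\nabla_j R$ together with the Ricci commutation formula, one obtains
$$\Delta R - \langle \nabla u, \nabla R\rangle \;=\; 2R - 2|Ric|^2.$$
This holds for any Riemannian shrinking gradient soliton, so no K\"ahler hypothesis is needed.

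At a point $p$ where $R$ attains its global minimum, $\nabla R(p)=0$ and $\Delta R(p)\geq 0$. Plugging in and using the elementary inequality $|Ric|^2 \geq R^2/n$ gives
$$R(p)\;\geq\; |Ric|^2(p) \;\geq\; \frac{R(p)^2}{n},$$
which rearranges to $R(p)(R(p)-n)\leq 0$. In particular $R_{\min}\in[0,n]$, so $R\geq 0$ everywhere on $X$.

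To promote this to strict positivity, rewrite the identity as
$$\bigl(\Delta - \langle \nabla u, \nabla\,\cdot\,\rangle - 2\bigr) R \;=\; -2|Ric|^2 \;\leq\; 0.$$
The operator on the left is linear and elliptic with non-positive zeroth-order coefficient, so $R$ is a supersolution satisfying the hypotheses of Hopf's strong minimum principle. If $R$ were to vanish at some point, the principle would force $R\equiv 0$, whence $Ric\equiv 0$. But then the soliton equation degenerates to $\nabla^2 u = g$; integrating the trace $\Delta u = n$ against $dV_g$ over the closed manifold $X$ yields the contradiction $0 = n\cdot \mathrm{Vol}(X) > 0$. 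Therefore $R>0$ strictly.

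The only nontrivial input is the weighted-Laplacian identity $\Delta R - \langle \nabla u, \nabla R\rangle = 2R - 2|Ric|^2$; once that is in hand, the maximum-principle bookkeeping and the final contradiction with compactness are routine. Note that the argument makes no use of the bound $\mathcal{F}_X(V)\leq F$, of the K\"ahler structure, or of the normalization chosen for $u$ in the preceding lemma, consistent with the lemma being stated for all compact shrinking gradient solitons.
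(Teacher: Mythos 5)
Your proof is correct and complete. Note that the paper itself offers no proof of this lemma at all---it is stated as well known and attributed to Ivey \cite{I}---so there is nothing internal to compare against; your argument is the standard one. The weighted identity $\Delta R-\langle\nabla u,\nabla R\rangle=2R-2|\mathrm{Ric}|^2$ is the right tool, the minimum-point computation with $|\mathrm{Ric}|^2\geq R^2/n$ correctly yields $R\geq 0$, and the upgrade to strict positivity via the Hopf strong minimum principle (legitimate here since the zeroth-order coefficient $-2$ is nonpositive and the putative interior minimum value is $0$) together with the integration of $\Delta u=n$ over the closed manifold is airtight. Your observation that no K\"ahler structure or Futaki bound is needed is also consistent with the generality in which the lemma is stated.
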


Then following Perelman's argument (see \cite{ST})
 combined with the above two lemmas,  one obtains the following lemma.
\begin{proposition} 
\label{proposition}
There exists $C=C(n, F) >0$ such that for all $(X, g) \in \mathcal{KR}(n, F)$,

\begin{equation}\label{geoqu}
|u| + |\nabla u|_g^2 + |R(g)| + Diam_g(X)  \leq C.
\end{equation}

\end{proposition}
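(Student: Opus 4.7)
The plan is to invoke Perelman's estimates for the normalized K\"ahler-Ricci flow on Fano manifolds, as refined by Sesum-Tian \cite{ST}, taking Lemmas 2.1--2.3 as the input. The crucial observation is that a K\"ahler-Ricci soliton $(X,g,V)$ gives rise to a K\"ahler-Ricci flow solution $g(t)=\phi_t^{*}g$ in which all metrics are isometric to $g$, so any time-uniform estimate along the flow is automatically a pointwise estimate for $g$.

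First I would extract the elementary bounds that follow directly from the soliton identities. Write $\bar u := \frac{1}{V}\int_X u\,e^{-u}\,dV_g$. The proof of Lemma 2.2 shows $\mu(g)=-\bar u$, so $\bar u\le A$. Combining (\ref{soleqn3}) with Lemma 2.3 ($R\ge 0$),
\[
|\nabla u|^{2}\ \le\ u-\bar u+n,\qquad u\ \ge\ \bar u-n\ \ge\ -A-n.
\]
Setting $w:=u-\bar u+n+1\ge 1$, the gradient inequality gives $|\nabla\sqrt{w}|\le\tfrac12$, i.e.\ $\sqrt{w}$ is $\tfrac12$-Lipschitz. Thus $\mathrm{osc}_X(u)$ is controlled by the diameter.

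The main step, and the main obstacle, is the diameter bound. Here I would invoke Perelman's $\kappa$-non-collapsing (which uses only $\mu(g)\ge -A$ from Lemma 2.2 together with $c_1^{n}(X)\in[c^{-1},c]$ from Lemma 2.1) and run the Sesum-Tian argument \cite{ST}. Their proof that along the K\"ahler-Ricci flow on a Fano manifold the scalar curvature and diameter are bounded in terms of $n$ and the $\mu$-invariant applies verbatim: in the soliton setting the flow consists of isometries, so the time-uniform bounds descend to
\[
R(g)\ \le\ C(n,F),\qquad \mathrm{Diam}_g(X)\ \le\ C(n,F).
\]
This step genuinely requires Perelman-type tools (non-collapsing, pseudo-locality, monotonicity of $\mathcal W$), not just elliptic maximum principle, and that is where all the depth is concentrated.

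Given the diameter bound, the rest is routine. The $\tfrac12$-Lipschitz property of $\sqrt w$ from Step~1 yields $\mathrm{osc}_X(u)\le C(n,F)$. The normalization $\frac{1}{V}\int_X e^{-u}dV_g=1$ forces $u_{\min}\le 0\le u_{\max}$, hence $|u|\le C$. Plugging back into the gradient inequality gives $|\nabla u|^{2}\le C$, and from $R=u-\bar u+n-|\nabla u|^{2}$ together with $R\ge 0$ we conclude $|R|\le C$. Collecting all four bounds proves (\ref{geoqu}). \qed
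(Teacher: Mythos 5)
Your proof follows essentially the same route as the paper: both view the soliton as a self-similar solution of the normalized K\"ahler--Ricci flow (so time-uniform flow estimates descend to pointwise estimates for $g$) and invoke Perelman's estimates in the Sesum--Tian form, fed by the $\mu$-lower bound, Ivey's $R>0$ and the volume bound, with the diameter bound as the one genuinely deep step, after which the bounds on $|u|$, $|\nabla u|^2$ and $|R|$ follow elementarily exactly as in the paper's ``the proposition then easily follows.'' One small sign slip worth fixing: what the explicit computation in the proof of Lemma 2.2 actually establishes is the \emph{lower} bound $\bar u \ge -C(n,F)$ (with the paper's normalization one has $\mathcal{W}(g,u)=\bar u$, not $-\bar u$), and that lower bound is precisely what you use in $u\ge \bar u-n$ and in bounding $|\nabla u|^2$ and $R$ from above, so the argument is unaffected.
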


\begin{proof} We give a sketch of the proof. The K\"ahler-Ricci soliton can be considered as a solution of the K\"ahler-Ricci flow
$$\ddt{g(t)}= -Ric(g(t)) +g(t), ~~~g(0) = g$$ after applying the holomorphic vector field (cf. \cite{PSSW}). Let $u(t)$ be the Ricci potential of $g(t)$ defined by $$Ric(g(t)) = g(t) - \ddbar u(t), ~~\int_X e^{-u(t) }dV_{g(t)} = V.$$
Since $R(t)>0$, the volume of $g$ is uniformly bounded and $\mu(g(t))=\mu(g)$ is uniformly bounded below, following \cite{ST}, $\int_X u(t) e^{-u(t)} dV_{g(t)}$ is uniformly bounded and $u(t)$ is uniformly bounded below. Notice that for any continuous function $h(z, t) = F(u(t) , |\nabla u(t) |_{g(t)}, \Delta_{g(t)} u(t))$, 
$$\max_{z\in X} h(z, t) = \max_{z\in X} h(z, 0).$$ Hence using Perelman's argument of the maximum principle, one has uniform bounds for $$ \frac{|\nabla u(t)|_{g(t)}|}{ u(t) +1 - \min_{z} u(t)}, ~~\frac{-\Delta_{g(t)} u(t)}{ u(t) +1 - \min_{z} u(t)} .$$ This will lead to the uniform bound of the diameter of $g(t)$ using the uniform lower bound of $\mu(g(t))$. The proposition then easily follows.

\qed
\end{proof}

\section{Conformal transformation and analytic compactness }

The following is the idea of  Z. Zhang \cite{Z}.  Let $(X, g)\in \mathcal{KR}(n, F)$, we apply a conformal transformation using the Ricci potential $u$
\begin{equation}
\tilde g= e^{-\frac{1}{n-1} u} g. 
\end{equation}
Then the uniform bounds on $u$ 
and on $|\nabla u|_{g}$
imply that $\tilde g$ and $g$ are $C^1$ equivalent.

The Ricci curvatures of the metrics $\tilde g$ and $g$ are related by the well-known equation
(see e.g. \cite{A}, section 6.1)
\begin{equation}
\tilde R_{ij}
=
R_{ij}+\nabla_i\nabla_j u+{1\over 2(n-1)}\nabla_iu\nabla_ju
-
{1\over 2(n-1)}(|\nabla u|_g^2-\Delta u)g_{ij}
\end{equation}
It follows that from the soliton equation
(\ref{soliton})
and Proposition \ref{proposition} that
the Ricci curvature of $\tilde g$ is bounded:

\begin{lemma}\label{riccibd} There exists $C=C(n, F)$ such that for any $(M, g) \in \mathcal{KR}(n, F)$,

$$- C \tilde g \leq Ric( \tilde g)  \leq C \tilde g.$$

\end{lemma}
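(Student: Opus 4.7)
The plan is to substitute the K\"ahler-Ricci soliton equation into the conformal change formula displayed just before the lemma statement, and then control each of the resulting error terms using Proposition \ref{proposition}. The key preliminary observation is that the K\"ahler conditions $u_{ij}=u_{\bar i\bar j}=0$ force the real Hessian $\nabla^2 u$ to be $J$-invariant (of pure $(1,1)$-type), so the complex soliton equation $R_{i\bar j}=g_{i\bar j}-u_{i\bar j}$ lifts to the real identity
\[
R_{ij}+\nabla_i\nabla_j u \;=\; g_{ij}
\]
on the underlying $2n$-real-dimensional Riemannian manifold. I would open the proof by flagging this point explicitly, since it is what makes the conformal change formula (stated in real Riemannian form) combine cleanly with the complex soliton equation.

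Plugging this identity into the given conformal change formula, the first two terms on the right collapse to $g_{ij}$, leaving
\[
\tilde R_{ij} \;=\; g_{ij} + \frac{1}{2(n-1)}\,\nabla_i u\,\nabla_j u - \frac{1}{2(n-1)}\bigl(|\nabla u|_g^2 - \Delta_g u\bigr)\,g_{ij}.
\]
The next step is to show that each of the three pieces on the right is bounded by a constant multiple of $g_{ij}$, viewed as a symmetric tensor. Proposition \ref{proposition} supplies uniform bounds on $|u|$, $|\nabla u|_g$, and $R(g)$. Since $\pm\,\nabla_i u\,\nabla_j u \le |\nabla u|_g^2\,g_{ij}$ as symmetric tensors, the rank-one piece is controlled. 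For the Laplacian term I would take the trace of the real soliton identity above, obtaining a relation of the form $R(g)+\Delta_g u = c_n$ for a dimensional constant $c_n$; the uniform bound on the scalar curvature then forces a uniform bound on $\Delta_g u$.

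Combining these estimates yields $-C g_{ij} \le \tilde R_{ij} \le C g_{ij}$ with $C=C(n,F)$. The last step is to transfer this to an inequality with respect to $\tilde g$: since $|u|$ is uniformly bounded, the conformal factor $e^{-u/(n-1)}$ lies between two positive constants, so $g$ and $\tilde g$ are uniformly equivalent, and the estimate upgrades to $-C'\tilde g \le \mathrm{Ric}(\tilde g) \le C'\tilde g$. I do not foresee any serious obstacle: the argument is essentially a direct calculation, and the only point requiring care is the bookkeeping needed to move between the complex K\"ahler form of the soliton equation and the real Riemannian form of the conformal change formula---a passage which works precisely because $\nabla^2 u$ is $J$-invariant.
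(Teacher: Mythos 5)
Your proof is correct and follows essentially the same route as the paper, which simply substitutes the soliton equation into the stated conformal change formula and invokes the uniform bounds on $|u|$, $|\nabla u|_g$ and $|R(g)|$ from Proposition \ref{proposition}. Your write-up just makes explicit the details the paper leaves implicit (the $J$-invariance of $\nabla^2 u$, the tensor bound on $\nabla_i u\,\nabla_j u$, the trace identity controlling $\Delta_g u$, and the uniform equivalence of $g$ and $\tilde g$).
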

With Lemma \ref{riccibd}, one can apply the general compactness results as in \cite{CC1, CC2, CC3, CCT}. The uniform bound of $u$  implies that the diameter of $(X, \tilde g)$ is uniformly bounded above and the volume of $(X, \tilde g)$ is uniformly bounded on both sides. In addition, one has the uniform nonlocal collapsing property for $\tilde g$. All the constants only depend on $n$ and $F$.
 We also have the following volume comparison:
 
\begin{corollary} There exist $\kappa=\kappa(n, F)>0$ such that for any $(X, g) \in \mathcal{KR}(n, F)$, 
\begin{equation}
\kappa^{-1} r^{2n} \leq Vol(B_g(z, r)) \leq \kappa r^{2n},
\end{equation} 
for any $z\in X$ and $r\leq 1$. 

\end{corollary}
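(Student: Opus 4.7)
The strategy is to reduce the estimate to the corresponding two-sided bound for the conformally modified metric $\tilde g$, for which classical comparison geometry applies. Since $|u|\leq C$ by Proposition \ref{proposition}, the conformal factor $e^{-u/(n-1)}$ is uniformly two-sided bounded by some $c=c(n,F)$, so $c^{-1}g\leq \tilde g\leq cg$ as tensors. This yields bi-Lipschitz equivalence of the induced distances, $c^{-1/2}d_g\leq d_{\tilde g}\leq c^{1/2}d_g$, the ball inclusions $B_{\tilde g}(z,c^{-1/2}r)\subset B_g(z,r)\subset B_{\tilde g}(z,c^{1/2}r)$, and the volume form comparison $c^{-n}\,dV_g\leq dV_{\tilde g}\leq c^n\,dV_g$ (with $c$ enlarged if needed to absorb the dimensional factor). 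It therefore suffices to establish
\[\kappa_0^{-1}r^{2n}\leq \mathrm{Vol}_{\tilde g}(B_{\tilde g}(z,r))\leq \kappa_0\,r^{2n}\qquad\text{for all } z\in X,\ 0<r\leq 1,\]
with $\kappa_0=\kappa_0(n,F)$, and then absorb the bounded conformal factors into a final constant $\kappa$.

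For $\tilde g$ the upper bound is a direct application of the Bishop-Gromov volume comparison: Lemma \ref{riccibd} provides the uniform Ricci lower bound $\mathrm{Ric}(\tilde g)\geq -C\tilde g$, which is the only input needed to conclude $\mathrm{Vol}_{\tilde g}(B_{\tilde g}(z,r))\leq \kappa_0\,r^{2n}$ at all scales $r\leq 1$, with constant depending only on $n$ and $C$. The lower bound comes from the uniform non-collapsing property for $\tilde g$ noted in the paragraph preceding the corollary: this property, ultimately derived from Perelman's $\kappa$-non-collapsing applied to the K\"ahler-Ricci flow representation of the soliton and then transferred to $\tilde g$ via the above bi-Lipschitz equivalence, supplies $\mathrm{Vol}_{\tilde g}(B_{\tilde g}(z,r))\geq \kappa_0^{-1}r^{2n}$ at the same scales.

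The only non-routine ingredient is the uniform non-collapsing, which relies on the global inputs established earlier in the paper, namely the uniform lower bound on $\mu(g)$ and the bounded scalar curvature from Proposition \ref{proposition}. Since this ingredient is already in hand, the remainder of the argument is routine: combine Bishop-Gromov with non-collapsing for $\tilde g$, then unwind the conformal change to transfer the resulting two-sided Euclidean volume estimate back to $g$. The final constant $\kappa$ can then be taken as an explicit function of $c$, $\kappa_0$, and the constants appearing in Lemma \ref{riccibd} and Proposition \ref{proposition}.
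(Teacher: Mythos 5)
Your argument is correct and matches the paper's intended route: the paper likewise derives this corollary from the uniform bound on $u$ (giving equivalence of $g$ and $\tilde g$), the Ricci bound of Lemma \ref{riccibd} together with Bishop--Gromov for the upper bound, and the uniform non-collapsing and volume/diameter bounds for $\tilde g$ for the lower bound. The paper leaves these steps implicit in the paragraph preceding the corollary, and your write-up simply makes them explicit.
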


One can now apply the results of Cheeger-Colding to $\tilde g$. With a careful treatment for the tangent cones, one derives the following theorem \cite{TZ, Z}, making use of  the uniform $C^1$ equivalence between  $g$ and $\tilde g$.

\begin{theorem}\label{TZ} Let $(X_i, g_i)\in \mathcal{KR}(n, F)$ 
be a sequence in $\mathcal{KR}(n,F)$
with uniformly bounded volumes.
Then after passing to a subsequence
if necessary, the sequence $(X_i,g_i)$ converges in the Gromov-Hausdorff sense to a compact metric length space $(X_\infty, g_\infty)$ satisfying the following:

\begin{enumerate}

\item The singular set $\Sigma_{X_\infty}$ of $X_\infty$ is of codimension no less than $4$;

\item On $X_\infty\setminus \Sigma_{X_\infty}$;
$g_\infty$ is a smooth K\"ahler metric satisfying the K\"ahler-Ricci soliton equation. The metric completion of $(X_\infty \setminus \Sigma_{X_\infty}, g_\infty)$ coincides with $(X_\infty, g_\infty)$;

\item $g_i$ converges to $g_\infty$ in $C^\infty$ topology on $X_\infty \setminus \Sigma_{X_\infty}$.

\end{enumerate}

\end{theorem}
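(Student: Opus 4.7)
The plan is to apply the Cheeger-Colding theory to the conformal metrics $\tilde g_i = e^{-u_i/(n-1)} g_i$ and then transfer the conclusions to the K\"ahler metrics $g_i$ via the uniform $C^1$ equivalence established earlier in this section. By Lemma \ref{riccibd}, the $\tilde g_i$ have uniformly bounded Ricci curvature; by Proposition \ref{proposition} and the volume comparison corollary, they also have uniformly bounded diameter, uniformly bounded volume (since we are assuming $\mathrm{Vol}(X_i,g_i)$ bounded and $g_i,\tilde g_i$ are uniformly $C^1$-equivalent), and satisfy a uniform non-collapsing condition with constants depending only on $n$ and $F$. The Cheeger-Colding compactness theorem \cite{CC1,CC2,CC3} then yields a subsequence converging in the Gromov-Hausdorff topology to a compact metric length space $(X_\infty, \tilde g_\infty)$ together with a decomposition $X_\infty = R \sqcup \Sigma_{X_\infty}$, where $R$ is open and dense, carries a $C^{1,\alpha}$ manifold structure, and $\tilde g_\infty|_R$ is $C^{1,\alpha}$. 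The uniform $C^1$ equivalence immediately transfers this to GH convergence of $(X_i, g_i)$ to $(X_\infty, g_\infty)$ on the same underlying space, with $g_\infty$ and $\tilde g_\infty$ related on $R$ by the limiting conformal factor $e^{u_\infty/(n-1)}$, where $u_\infty = \lim u_i$ exists in $C^{0,\alpha}$ by the uniform $C^1$ bound of Proposition \ref{proposition}.

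Next I would upgrade the codimension of $\Sigma_{X_\infty}$ from $2$ (given by Cheeger-Colding) to $4$. This is the main obstacle, and requires exploiting the K\"ahler structure. The complex structures $J_i$ are parallel with respect to $g_i$ and have pointwise norm $\sqrt n$; after passing to harmonic charts for $\tilde g_i$ on a relatively compact open subset of $R$, elliptic theory together with the $C^1$ equivalence produces a limit $J_\infty$ on $R$ making $(R, g_\infty, J_\infty)$ K\"ahler. Applying the K\"ahler version of Cheeger-Colding-Tian \cite{CCT}, each tangent cone at a point of $\Sigma_{X_\infty}$ splits as a K\"ahler cone, and the singular stratum of the cone has real codimension at least $4$; iterating at singular points of the cross-section propagates this bound to $\Sigma_{X_\infty}$ itself. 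One must be careful that the standard CCT hypothesis asks for a Ricci bound on the K\"ahler metric, which $g_i$ does not satisfy; the remedy is to perform the cone analysis for $\tilde g_i$ (which has bounded Ricci) and use the $C^1$ equivalence so that tangent cones of $g_i$ arise from those of $\tilde g_i$ by multiplication by the constant value of $e^{u_\infty/(n-1)}$ at the base point.

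For the smooth convergence and the soliton equation on $R$, I would work in a harmonic chart for $\tilde g_i$ inside $R$, in which the Ricci tensor of $g_i$ takes the form $-\tfrac12 \Delta_{\tilde g_i}(g_i)_{kl} + Q(g_i, \partial g_i)$. Combined with the K\"ahler-Ricci soliton identity $R_{i\bar j}(g_i) = (g_i)_{i\bar j} + \nabla_i \nabla_{\bar j} u_i$ and the equation $\nabla_i \nabla_j u_i = 0$, this is a quasi-linear elliptic system in $(g_i, u_i)$. The uniform $C^{0,\alpha}$ control of $u_i$ and the $C^{1,\alpha}$ control of $g_i$ on $R$ then allow a standard Schauder bootstrap, upgrading convergence to $C^\infty$ on every relatively compact open subset of $R$ and identifying $V_\infty = -g_\infty^{i\bar j}(u_\infty)_{\bar j}$ as a holomorphic vector field on $R$ with $Ric(g_\infty) = g_\infty + L_{V_\infty} g_\infty$.

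Finally, the assertion that the metric completion of $(X_\infty\setminus\Sigma_{X_\infty}, g_\infty)$ is $(X_\infty, g_\infty)$ is a direct consequence of the Cheeger-Colding structure theory and the non-collapsing: $R$ is open and dense in $X_\infty$, and every point of $\Sigma_{X_\infty}$ is a limit in $X_\infty$ of points of $R$ at bounded intrinsic distance along minimizing segments, so the identity on $R$ extends to an isometric homeomorphism from the abstract completion onto $X_\infty$. The principal difficulty throughout is Step~2, where one must simultaneously carry the K\"ahler and almost-K\"ahler structure through the Gromov-Hausdorff limit and circumvent the lack of a pointwise Ricci bound on $g_i$ by working at the level of $\tilde g_i$.
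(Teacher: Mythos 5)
Your overall route is the same as the paper's: pass to the conformal metrics $\tilde g_i$, invoke Lemma \ref{riccibd} together with the uniform diameter, volume and non-collapsing bounds to apply Cheeger--Colding, upgrade the singular set to codimension $4$ by the K\"ahler argument of Cheeger--Colding--Tian applied at the level of tangent cones (where the blow-up kills the variation of $u$, so the cones for $g$ and $\tilde g$ differ only by a constant factor and the ``almost K\"ahler'' approximation suffices), and transfer everything back through the uniform $C^1$ equivalence; this is precisely the ``careful treatment for the tangent cones'' the paper attributes to \cite{TZ, Z}. The one place you genuinely diverge is item (3): the paper obtains the $C^\infty$ convergence on $X_\infty\setminus\Sigma_{X_\infty}$ from a variant of Perelman's pseudolocality theorem \cite{FZZ}, using that the soliton is a self-similar solution of the K\"ahler--Ricci flow, whereas you propose an elliptic Schauder bootstrap for the coupled system in $(g_i,u_i)$ in harmonic coordinates. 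Your route is viable and is closer in spirit to \cite{TZ}, but it requires one more step than you state: Cheeger--Colding gives $C^{1,\alpha}$ convergence only for $\tilde g_i$, and since $g_i=e^{u_i/(n-1)}\tilde g_i$ with $u_i$ a priori merely Lipschitz (Proposition \ref{proposition}), you must first improve $u_i$ via its own scalar equation $\Delta u_i=n-R_i=n-u_i-c_i+|\nabla u_i|^2$ (from (\ref{soleq1})--(\ref{soleq2})) to get $u_i\in W^{2,p}\cap C^{1,\alpha}$ before the metric equation has H\"older data and the alternating bootstrap can start; also, the harmonic-coordinate formula $\mathrm{Ric}(g)=-\tfrac12\Delta_g g+Q$ holds in $g$-harmonic, not $\tilde g$-harmonic, charts, so you should either rewrite the system for $\tilde g_i$ using the conformal Ricci formula already displayed in this section, or justify a lower bound on the $g_i$-harmonic radius. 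The pseudolocality route sidesteps this bookkeeping by giving interior curvature bounds directly, at the cost of importing a Ricci-flow black box; your route is more self-contained but needs the regularization of $u_i$ made explicit.
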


The $C^\infty$ convergence on the regular part of $X_\infty$ is achieved by making use of a variant of Perelman's pseudolocality theorem due to \cite{FZZ} since the soliton metric is a solution of the Ricci flow. The goal of the rest of the paper is to show that $X_\infty$ is isomorphic to a projective variety equipped with a canonical K\"ahler-Ricci soliton metric. 

\section{$L^2$-estimates}

In this section, we will obtain some uniform $L^2$-estimates for $H^0(X, K_X^{-k})$ when $X\in \mathcal{KR}(n, F)$. Using the same notations in \cite{DS}, we denote  
$$ K_X^\sharp = K_X^{-k}, ~h^\sharp = h^k, ~\omega^\sharp = k\omega, ~L^{p, \sharp}(X)= L^p(X, \omega^\sharp), $$ 
where $h$ is the hermitian metric on $K_X^{-1}$ with its curvature $Ric(h)=\omega$. The hermitian metric on $K_X^{-1}$ is equivalent to a volume form on $X$ and since $g$ satisfies the soliton equation, we can normalize $h$ such that 
$$h = e^{-u} \omega^n, ~~\int_X e^{-u} \omega^n = \int_X\omega^n = c_1(X)^n. $$ 
We also note that the Bergman kernal $\rho_{X, k}$ is invariant under any scaling for $h$. 

Since the Sobolev constant is uniform for $\tilde g$, so it is for $g$ as $g$ and $\tilde g$ are uniformly equivalent, when $(X, g) \in \mathcal{KR}(n, F)$. 
The following proposition, which shows that
Proposition \ref{proposition} in \cite {DS} can be extended to the case of K\"ahler-Ricci solitons, is one of the key components in the proof of Theorem 1.1:

\begin{proposition} \label{l2est} There exist  $a=a (n, F)$, $K_1=K_1(n, F)$, $K_2=K_2(n, F) >0$ such that if $(X, g) \in \mathcal{KR}(n, F)$ and $s\in H^0(X, K_X^{-k})$ for $k\geq 1$, then 
\begin{enumerate}

\item $\|s\|_{L^{\infty, \sharp}} \leq K_1 \|s\|_{L^{2, \sharp}}$;

\item  $ \|\nabla s\|_{L^{\infty, \sharp}} \leq K_2 \|s\|_{L^{2, \sharp}} $;

\item We consider the $L^2$ inner product for any $K_X^{-k}$-valued $(0,1)$-form $\sigma$ defined by
$$\int_X |\sigma|_{h^\sharp, g^\sharp} ^2 e^{-u} dV_{g^\sharp} $$ 
and its induced adjoint operator $\dbar _u^*$ of $\dbar$. Then the Beltrami-Laplace operator $\Delta_{\dbar, u}^\sharp = \dbar \dbar^*_u + \dbar_u^* \dbar $ is invertible with
\begin{equation}
\Delta_{\dbar, u}^\sharp \geq a. 
\end{equation}

\end{enumerate}

\end{proposition}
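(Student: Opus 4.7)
The plan is to adapt the arguments of Donaldson--Sun \cite{DS} for the K\"ahler--Einstein case to the soliton setting, using the weighted $L^2$ inner product $\int|\cdot|^2 e^{-u}dV$ to absorb the drift coming from the Ricci potential $u$. Throughout, Proposition \ref{proposition} provides uniform control on $|u|$, $|\nabla u|_g$, and $|R(g)|$, which lets one compare the weighted and unweighted $L^2$ norms up to multiplicative constants depending only on $n$ and $F$. A uniform Sobolev constant for $g$ (and hence for $g^\sharp = k g$, since the Sobolev constant scales trivially) is inherited from $\tilde g$ via the $C^1$-equivalence of $g$ and $\tilde g$ combined with the Ricci bound of Lemma \ref{riccibd}.

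For parts (1) and (2), I would work in the rescaled metric $g^\sharp = kg$ and run Moser iteration. In $g^\sharp$ the sectional curvatures are $O(1/k)$ and the Ricci curvature is $O(1)$, so standard Weitzenb\"ock identities give, for a holomorphic section $s \in H^0(X, K_X^{-k})$,
$$\Delta_{g^\sharp}|s|_{h^\sharp}^2 \geq -C|s|_{h^\sharp}^2, \qquad \Delta_{g^\sharp}|\nabla s|_{h^\sharp, g^\sharp}^2 \geq -C'|\nabla s|_{h^\sharp, g^\sharp}^2 - C''|s|_{h^\sharp}^2,$$
with uniform constants. Moser iteration against the uniform Sobolev inequality gives (1); feeding the $L^\infty$ bound from (1) as a zero-order term into the iteration for $|\nabla s|^2_{h^\sharp, g^\sharp}$ gives (2).

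The heart of the proposition is part (3). The weighted inner product $\int_X|\sigma|_{h^\sharp}^2 e^{-u}dV_{g^\sharp}$ is precisely the standard $L^2$ inner product with respect to the modified Hermitian metric $\tilde h^k := h^k e^{-u}$ on $K_X^{-k}$, and $\dbar_u^*$ is the Chern-connection adjoint for $\tilde h^k$. The curvature of $\tilde h^k$ is $\Theta(\tilde h^k) = k\omega + \sqrt{-1}\partial\dbar u$. Applying the Bochner--Kodaira--Nakano identity to a $K_X^{-k}$-valued $(0,1)$-form $\sigma$ in this setup yields
$$\langle \Delta_{\dbar, u}^\sharp \sigma, \sigma\rangle_u = \|\nabla''\sigma\|_u^2 + \int_X \langle (\Theta(\tilde h^k) + Ric(\omega))\sigma, \sigma\rangle \, e^{-u}\,dV_g,$$
and the soliton equation $Ric(\omega) = \omega - \sqrt{-1}\partial\dbar u$ produces the clean cancellation $\Theta(\tilde h^k) + Ric(\omega) = (k+1)\omega$. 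Rescaling to $\omega^\sharp = k\omega$, this curvature lower bound contributes $(k+1)/k \geq 1$ to the spectrum, giving a uniform positive constant $a = a(n, F)$.

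The step I expect to be the main obstacle is the careful bookkeeping in the weighted Bochner--Kodaira--Nakano computation: identifying $\dbar_u^*$ correctly as the Chern adjoint of $\tilde h^k$, ensuring that the cancellation between the $\sqrt{-1}\partial\dbar u$ term appearing in $\Theta(\tilde h^k)$ and the one coming from the soliton equation is genuinely pointwise (and not merely integral), and tracking the factor of $k$ through the rescaling from $\omega$ to $\omega^\sharp$ so that the resulting lower bound is independent of $k$. The precise form of the soliton equation suggests that the weight $e^{-u}$ is exactly the right modification to eliminate the drift, but the identification of bundle metric versus volume form contributions must be handled carefully.
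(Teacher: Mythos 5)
Your treatment of parts (1) and (3) matches the paper's: part (1) needs only the uniform Sobolev constant together with $\Delta|s|\geq -|s|$ (which uses just the line-bundle curvature, not the Ricci curvature), and for part (3) you correctly identify $\dbar_u^*$ as the adjoint for the weight $h^ke^{-u}$ and the pointwise cancellation $\Theta(h^ke^{-u})+Ric(\omega)=(k+1)\omega$, which is exactly how the paper gets the uniform spectral gap $a(n,F)$.

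The gap is in part (2). You assert that in $g^\sharp=kg$ ``the sectional curvatures are $O(1/k)$ and the Ricci curvature is $O(1)$,'' and on that basis you claim a Weitzenb\"ock inequality $\Delta|\nabla s|^2\geq -C'|\nabla s|^2-C''|s|^2$ with uniform constants. Neither curvature bound is available here. There is no sectional curvature control at all in $\mathcal{KR}(n,F)$, and --- more to the point --- the Ricci curvature of a soliton is $R_{i\bar j}=g_{i\bar j}-u_{i\bar j}$, where Proposition \ref{proposition} bounds only $|u|$, $|\nabla u|_g$ and the \emph{scalar} curvature, not the complex Hessian $u_{i\bar j}$. (That $Ric(g)$ is not uniformly bounded is precisely why the paper passes to the conformal metric $\tilde g=e^{-u/(n-1)}g$ in Section 3 to invoke Cheeger--Colding; if $Ric(g)$ were bounded that detour would be unnecessary.) Consequently the term $\langle\nabla s,\nabla s\rangle_{Ric(\omega)}$ in the Bochner formula for $\Delta|\nabla s|^2$ cannot simply be absorbed as a zero-order term, and your Moser iteration for $|\nabla s|$ does not close.

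The paper's fix, which is the genuinely new ingredient relative to Donaldson--Sun, is to substitute $Ric(\omega)=\omega-\ddbar u$ into the Bochner identity and then, at each stage of the Moser iteration, integrate the resulting term $\int\langle\nabla s,\nabla s\rangle_{\ddbar u}\,|\nabla s|^{p-1}$ by parts. The holomorphy of the soliton field (equivalently $u_{ij}=0$) and the uniform bound on $|\nabla u|_g$ let one trade the unbounded second derivatives $u_{i\bar j}$ for first derivatives of $u$ hitting $\nabla\nabla s$, $\nabla s$ and $s$, yielding a bound of the form $Cp\int(|\nabla\nabla s||\nabla s|^p+|s||\nabla s|^p+|\nabla s|^{p+1})$; the $|\nabla\nabla s|$ contribution is then absorbed by the good term $|\nabla\nabla s|^2/|\nabla s|$ already present in the formula for $\Delta|\nabla s|$. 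You would need to supply this integration-by-parts step for your proof of (2) to go through.
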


\begin{proof} The proof proceeds in a similar way as in \cite{DS}.  

\begin{enumerate} 

\item  Let $(X, g)$ be any element in $\mathcal{KR}(n, F)$. The bound
on the Sobolev constant of $(X, g)$ only depends on $n$ and $F$, and so does the Sobolev constant for the rescaled metric $(X, kg, h^k)$. For simplicity, we write $|s|$ for $|s|_{h^k}$ and $s\in H^0(X,K_X^{-k})$. A pointwise calculation shows that
$$ \Delta |s| \geq - |s|. $$ 
The first inequality then follows immediately from Moser iteration.

\item We drop the index $\sharp$ for simplicity. A direct calculation give the following identities. 
\begin{eqnarray}
\Delta |\nabla s|^2 
&=& \langle \nabla s, \nabla s \rangle_{Ric(\omega)} + |\nabla\nabla s|^2 - 2|\nabla s|^2 + n|s|^2
\\
\Delta |\nabla s|
&=& - |\nabla s| + \frac{n}{2} \frac{|s|^2}{|\nabla s|} +\frac{\langle \nabla s, \nabla s \rangle_{Ric(\omega)}}{2|\nabla s|} + \frac{|\nabla \nabla s|^2}{2|\nabla s|} - \frac{|\nabla |\nabla s|^2 |^2}{4 |\nabla s|^3}
\nonumber
\end{eqnarray}
On the other hand, we have the following inequality
\begin{eqnarray}
|\nabla |\nabla s|^2 |^2 
&=& g^{k\bar\ell}g^{i\bar j}g^{p\bar q}
(\nabla_i\nabla_k s\nabla_{\bar q}\nabla_{\bar \ell}\bar s\nabla_ps\nabla_{\bar j}\bar s
+
g_{p\bar\ell}\nabla_i\nabla_k s\nabla_{\bar j}\nabla_{\bar q}\bar s
\nonumber
\\
&&
\qquad\qquad\quad
+g_{k\bar j}\nabla_{\bar q}\nabla_{\bar\ell}\bar s
\nabla_i s\nabla_p s
+
g_{k\bar j}g_{p\bar\ell}|s|^2
\nabla_is\nabla_{\bar q}s)\nonumber
\\
&\leq&|\nabla\nabla s|^2 |\nabla s|^2 + 2 |s||\nabla s|^2 |\nabla\nabla s| + |s|^2|\nabla s|^2 
\end{eqnarray}
Using this inequality, we obtain  
\begin{eqnarray*}
\Delta |\nabla s| &\geq& -  |\nabla s| + \frac{n}{2} \frac{|s|^2}{|\nabla s|} +\frac{\langle \nabla s, \nabla s \rangle_{Ric(\omega)}}{2|\nabla s|} \\
&& + \frac{|\nabla \nabla s|^2}{4|\nabla s|} - \frac{|s||\nabla\nabla s|}{2 |\nabla s|} - \frac{|s|^2}{4 |\nabla s|} \\
&=& - \frac{2 - k^{-1} }{2} |\nabla s| + \frac{n}{2} \frac{|s|^2}{|\nabla s|} -\frac{\langle \nabla s, \nabla s \rangle_{\ddbar u }}{2|\nabla s|} \\
&& + \frac{|\nabla \nabla s|^2}{4|\nabla s|} - \frac{|s||\nabla\nabla s|}{2 |\nabla s|} - \frac{|s|^2}{4 |\nabla s|} \\
\end{eqnarray*}
We can now start the Moser iteration process, 
\begin{eqnarray*}
&&- \int |\nabla s |^p \Delta |\nabla s| 
= \frac{4p}{(p+1)^2}\int |\nabla  |\nabla s|^{(p+1)/2} |^2\\
&\leq&2^{-1} \int  ( 2 |\nabla s|^{p+1}  - n |s|^2|\nabla s|^{p-1} - 2^{-1} |\nabla\nabla s|^2 |\nabla s|^{p-1} + |s| |\nabla \nabla s| |\nabla s|^{p-1}  + 2^{-1} |s|^2 |\nabla s|^{p-1} ) \\
&& + 2^{-1}\int    \langle \nabla s, \nabla s \rangle_{\ddbar u} |\nabla s|^{p-1}. 
\end{eqnarray*}
We will deal with the last quantity using integration by part. Suppose $h= e^{-\varphi}$. In normal coordinates, $\varphi_i = \varphi_{\bar j} = 0$, $\varphi_{i\bar j} = g_{i\bar j} =\delta_{i\bar j}.$ 
$$ ( |\nabla s|^2)_{\bar j} = g^{k\bar l} (-\varphi_{k\bar j} s \bar s _{\bar l} + s_k \bar s _{\bar l \bar j}) =- s\bar s_{\bar j} + s_k \bar s_{\bar k \bar j}. $$
Then 
\begin{eqnarray*}
&&\left|\int \langle \nabla s, \nabla s \rangle_{\ddbar u}  | \nabla s |^{p-1} \right|\\
&=& \left| \int (s_i - s \varphi_i) (\bar s_{\bar j} - \bar s \varphi_{\bar j} ) u_{i\bar j} e^{-\varphi} |\nabla s|^{p-1} \right|\\
&=&\left| \int u_{\bar j} ( s_{ii} \bar s_{\bar j} - s s_i \varphi_{i\bar j}) |\nabla s|^{p-1} + \frac{p-1}{2} \int  u_{\bar j} s_i \bar s_{\bar j} (|\nabla s|^2)_{\bar j} |\nabla s|^{p-3} \right|\\
&\leq & Cp \int ( |\nabla\nabla s| |\nabla s|^{p} + |s| |\nabla s|^p+ |\nabla s|^{p+1}). 
\end{eqnarray*}
Without loss of generality, we can assume that $\|s\|_{L^2} =1$. Then by applying the Cauchy-Schwarz inequality and (1)  in the proposition, we have 
\begin{eqnarray*}
\int |\nabla ( |\nabla  s|^{(p+1)/2})  |^2 
&\leq& Cp^3 \int ( |\nabla s|^{p+1} + |s|^2|\nabla s|^{p-1} )\\
&\leq& CK_2p^3 \int ( |\nabla s|^{p+1} + \|s\|_{L^2}^2 |\nabla s |^{p-1})\\
&\leq & CK_1 p^3 \max ( \|\nabla s\|_{L^{p+1}}^{p+1}, 1)
\end{eqnarray*}
From the Sobolev inequality, we have, with $\beta= \frac{n}{n-1}$
$$ \|\nabla s \|_{L^{p\beta}} \leq (CK_1p)^{3/p} \max( \|\nabla s\|_{L^p}, 1). $$
Then Moser's iteration applies starting from $p=2$ and we obtain 

$$\|\nabla s\|_{L^\infty} \leq C.$$

\item 

The last inequality follows from the Bochner-Kodaira-Nakano identity, where the weight $e^{-u}$ eliminates the $\ddbar u$ in the soliton equation for $Ric(g)$.  More precisely, let 
$$\langle \sigma , \sigma \rangle = \int_X |\sigma|^2 e^{-u} (\omega^\sharp)^n$$ be the $L^2$- product for 
$\sigma\in \Omega^{0,1}\otimes K_X^{-k},$
$$\Delta_{\dbar, u} = \dbar\dbar^*_u + \dbar^*_u \dbar, ~~\Delta_{D_u} = -(D_u)^{\bar j} (D_u)_{\bar j}, $$
where $(D_u)$ is the covariant derivative on $\Omega^{(0, 1)}\otimes K_X^{-k}$ with respect to the K\"ahler metric $g$ and the hermitian metric $h e^{-u}$. 
We have the following Bochner-Kodaira-Nakano identity (cf. \cite{KM}). 
\begin{eqnarray*}
(\Delta_{\dbar, u}  \sigma)_{\bar j}  &=&  (\Delta_{D_u} \sigma)_{\bar j} + (g)^{i\bar q} \left(g_{i\bar j} + u_{i\bar j} +(Ric(g))_{i\bar j} \right) \sigma_{\bar q} \\
&=&  (\Delta_{D_u} \sigma)_{\bar j} + \frac{k+1}{k} \sigma_{\bar j}.
\end{eqnarray*}
This immediately implies that 
\begin{equation}
\langle \Delta_{\dbar, u} \sigma, \sigma \rangle \geq  e^{-\sup u} ||\sigma||_{L^{2}}^2.
\end{equation}

\end{enumerate}
The proof of the proposition is complete.
\qed
\end{proof}

\section{Partial $C^0$ estimate}

We now consider a slight modification of the $H$-property introduced by Donaldson-Sun \cite{DS}. 

\begin{definition} We consider the follow data $(p_*, D, U, \Lambda, J, g, h, A)$ satisfying 

\begin{enumerate}

\item  $(p_*, U, J, g)$  is an open bounded K\"ahler manifold with a complex structure $J$, a K\"ahler metric $g$ and a base point $p_* \in U$; 

\item $\Lambda \rightarrow U$ is a hermitian line bundle equipped with a hermitian metric $h$. $A$ is the connection induced by the hermitian metric $h$ on $\Lambda$, with  curvature $\Omega(A) = g$. $D$ is an open disc with $p_*  \in D\subset \subset U$.

\end{enumerate}

\noindent The data $(p_*, D, U, \Lambda, J, g, h, A)$ is said to have the $H'$-property if there exist $C>0$ and  a compactly supported smooth section $\sigma: U\rightarrow \Lambda$   satisfying 

\begin{enumerate}

\item [$H'_1$:]   $\|\sigma \|_{L^2} < (2\pi)^{n/2}$;

\item [$H'_2$:]  $|\sigma(p_*) | >3/4$;

\item [$H'_3$:] for any holomorphic section $\tau$ of $\Lambda$ over a neighborhood of $\overline{D}$,
$$|\tau(p_*)| \leq C \| \tau \|_{L^2(D)}  ;$$

\item [$H'_4$:]  $\|\dbar \sigma \|_{L^2} < \min \left( \frac{a^{1/2}}{4 C}, \frac{(2\pi)^{n/2}}{10\sqrt{2}} \right), $ where $a=a(n, F)$ is the constant in Proposition \ref{l2est}; 

\item [$H'_5$:]  $\sigma$ is constant in $D$.

\end{enumerate}

\end{definition}

It is straightforward to check that the $H'$-property is open with respect to $C^l$ variations in $(g, J, A)$ for any $l \geq 0$ with $(p_*, D, U, \Lambda)$ being fixed. 

The standard application of $L^2$-estimate implies the following lemma (cf. \cite{DS}).

\begin{lemma} Suppose $(X, g)\in \mathcal{KR}(n, F)$. There exists $b=b(n, F)>0$ such that if  $p\in D \subset\subset U\subset X$ satisfies property $H$ with  $\Lambda= K_X^{-k}$ for some $k>0$, then 
\begin{equation}
\rho_{X, k}(p) > b.
\end{equation}

\end{lemma}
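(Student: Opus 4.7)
The plan is to execute the standard Donaldson--Sun $L^2$/H\"ormander argument in the present soliton-weighted setting. The approximately holomorphic section $\sigma$ from the $H'$-property will be corrected to a genuinely holomorphic section $\tau \in H^0(X,K_X^{-k})$ by solving a $\dbar$-equation, and then the value of $\tau$ at $p_*$ will be bounded below while its $L^{2,\sharp}$-norm is bounded above. The key substitute for the usual positive Ricci contribution in the Bochner--Kodaira identity is the spectral gap $\Delta^\sharp_{\dbar, u} \geq a$ supplied by Proposition \ref{l2est}(3).

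First, I would solve $\dbar \tau_0 = \dbar \sigma$ by setting
\[
\tau_0 \;=\; \dbar^*_u \bigl(\Delta^\sharp_{\dbar, u}\bigr)^{-1}\dbar \sigma.
\]
Since $\Delta^\sharp_{\dbar, u} = \dbar\,\dbar^*_u$ on $\dbar$-exact $(0,1)$-forms, this gives $\dbar \tau_0 = \dbar\sigma$, and the spectral lower bound yields
\[
\|\tau_0\|^2_{L^2_u} \;=\; \bigl\langle \dbar\sigma,\; (\Delta^\sharp_{\dbar,u})^{-1} \dbar\sigma\bigr\rangle \;\leq\; a^{-1} \|\dbar\sigma\|^2_{L^2_u},
\]
where $L^2_u$ denotes the weighted norm $\int |\cdot|^2 e^{-u}\, dV_{g^\sharp}$ from Proposition \ref{l2est}(3). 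Setting $\tau := \sigma - \tau_0$ produces a holomorphic section of $K_X^{-k}$.

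Next, I would estimate $|\tau(p_*)|$ from below. By $H'_5$, $\sigma$ is constant on $D$, so $\dbar\sigma = 0$ there and hence $\dbar\tau_0 = 0$ on $D$; after a harmless shrinking of $D$, $\tau_0$ is holomorphic on a neighborhood of $\overline{D}$. Chaining $H'_3$, the $L^2$-estimate above, and $H'_4$,
\[
|\tau_0(p_*)| \;\leq\; C\|\tau_0\|_{L^2(D)} \;\leq\; C\|\tau_0\|_{L^2_u} \;\leq\; C a^{-1/2}\|\dbar\sigma\|_{L^2_u} \;\leq\; \tfrac{1}{4},
\]
so that $H'_2$ gives $|\tau(p_*)| \geq |\sigma(p_*)| - |\tau_0(p_*)| \geq \tfrac{3}{4} - \tfrac{1}{4} = \tfrac{1}{2}$. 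The global $L^{2,\sharp}$-norm of $\tau$ is then controlled by the triangle inequality together with $H'_1$, $H'_4$, and the uniform equivalence of $L^2_u$ and $L^{2,\sharp}$ coming from the $L^\infty$-bound on $u$ in Proposition \ref{proposition}:
\[
\|\tau\|_{L^{2,\sharp}} \;\leq\; \|\sigma\|_{L^{2,\sharp}} + \|\tau_0\|_{L^{2,\sharp}} \;\leq\; C'(n,F).
\]
Since the Bergman kernel satisfies $\rho_{X,k}(p_*) = \sup_{0\neq s\in H^0}|s(p_*)|^2_{h^k}/\|s\|^2_{L^{2,\sharp}}$, the lemma follows with $b = 1/\bigl(4\,C'(n,F)^2\bigr)$.

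The main technical point to handle carefully is the bookkeeping of weights: Proposition \ref{l2est}(3) naturally produces estimates in the $u$-weighted norm $L^2_u$ tailored to the soliton equation (where the weight $e^{-u}$ cancels the $\ddbar u$ in $Ric(g)$ so that the residual curvature term is positive), while the $H'$-property and the Bergman kernel are phrased in the unweighted $L^{2,\sharp}$-norm. The uniform bound $\|u\|_{L^\infty}\leq C(n,F)$ from Proposition \ref{proposition} makes the two norms equivalent with constants depending only on $n$ and $F$, so that every constant appearing in the argument depends only on $n$ and $F$, as required for a uniform lower bound $b=b(n,F)>0$.
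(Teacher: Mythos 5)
Your proposal is correct and follows essentially the same route as the paper: the same correction $\tau_0=\dbar^*_u(\Delta^\sharp_{\dbar,u})^{-1}\dbar\sigma$, the same spectral-gap bound $\|\tau_0\|\leq a^{-1/2}\|\dbar\sigma\|$, and the same chaining of $H'_2$, $H'_3$, $H'_4$ to get $|\sigma-\tau_0|(p_*)>1/2$ with a controlled global $L^2$-norm. You are in fact slightly more explicit than the paper on two points it glosses over, namely why $H'_3$ applies to $\tau_0$ (it is holomorphic near $\overline{D}$ because $H'_5$ kills $\dbar\sigma$ there) and the equivalence of the $u$-weighted and unweighted $L^2$-norms via the uniform bound on $u$.
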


\begin{proof} Let $\sigma$ be a smooth section in the definition of $H'$-property. 
We define
$$\tau= \dbar_u^* (\Delta_{\dbar, u})^{-1} \dbar \sigma, ~~ s= \sigma - \tau.$$
$\dbar \tau = \dbar  \sigma$ since $\dbar \Delta_{\dbar, u} = \Delta_{\dbar, u} \dbar $.  Therefore $\dbar s  = 0$ and so $s\in H^0(X, K_X^{-k})$. 
The $L^2$ norm of $s$ is bounded by 
\begin{eqnarray*}
||s||_{L^2} & \leq &  ||\sigma||_{L^2} + ||\tau||_{L^2} \\
&\leq & (2\pi)^{n/2} + a^{-1/2} ||\dbar \sigma ||_{L^2}\\
&\leq & (2\pi)^{n/2}(1+(200a)^{-1/2} ).
\end{eqnarray*}
On the other hand, by $H'_3$ and the calculations above,
\begin{eqnarray*}
|s|(p) &\geq& |\sigma|(p) - |\tau|(p)\\
&> & 3/4 - C \| \tau |_{L^2(D)}\\
&\geq& 3/4 - C a^{-1/2} \|\dbar \sigma\|_{L^2} \\
&>& 1/2.
\end{eqnarray*}
The lemma then immediately follows.

\qed
\end{proof}

\bigskip

\noindent{\it Proof of Theorem \ref{parc0}. }  For any sequence $(X_i, g_i) \in \mathcal{KR}(n, F)$, a subsequence will
converge to some $(X_\infty, g_\infty)$ as in Theorem \ref{TZ}. For any $p\in X_\infty$, any tangent cone $C(Y)$ at $p$ is a metric cone whose link $Y$ has a singular set $\Sigma_{Y}$ of real codimension at least $4$. The cone metric on $C(Y)$ is a smooth Ricci flat K\"ahler metric $g_{C(Y)}$ on the regular part with  
\begin{equation}
g_{C(Y)}= dr^2 + r^2g_Y = \ddbar r^2/2,
\end{equation} 
where $r=|z|$ is the distance from the vertex $p$ to $z$. With the $L^2$ estimates proved in Proposition \ref{l2est},  the argument of Donaldson-Sun in section 3 of \cite{DS} can be immediately applied. In particular, there exist $k>0$ and $(p_*,D, U)$ with $p_*\in D\subset U\subset\subset C(Y)_{reg}$, such that $(p_*, D, U, \Lambda^k, J_{C(Y)}, kg_{C(Y)}, A^{\otimes k}_{C(Y)})$ satisfies Property $H'$ for sufficiently small perturbations of  $g_{C(Y)}$ and $J_{C(Y)}$ in $C^0(U)$, where
 $\Lambda\rightarrow U$ is a trivial hermitian line bundle with hermitian metric $h_{C(Y)}= e^{-|z|^2/2}$.
 
 \qed

\section{Limiting K\"ahler-Ricci solitons}

\begin{definition}
\label{discrepancy}
Let $X$ be a normal variety with $K_X$ being a $\mathbb{Q}$-Cartier divisor. Let $\pi: \tilde X \rightarrow X$ be a resolution of singularities with 
\begin{equation}
K_{\tilde X} = \pi^* K_X + \sum a_i E_i,
\end{equation}
where $a_i \in \mathbb{Q}$ and $E_i$ are the exceptional divisors of $\pi$. 
$X$ is said to have log terminal singularities if $a_i >-1$, for all $i$. The discrepancy of $X$ is defined by 
\begin{equation}
discr(X)= \inf_{\pi, i} (1, a_i),
\end{equation}
for all resolution $\pi: \tilde X \rightarrow X$.

\end{definition}

After establishing the partial $C^0$ estimate in Theorem \ref{parc0}, the arguments in sections 4.1, 4.2 and 4.3 of \cite{DS} can be faithfully applied to show that there exists $k=k(n, F)>0$ such that any sequence $(X_i, g_i, K_{X_i}^{-k})\in \mathcal{KR}(n, F)$, after passing to a subsequence, converges to a polarized limit $(X_\infty, g_\infty, K_{X_\infty}^{-k})$ with $X_\infty = Proj(R(X_\infty, K_{X_\infty}^{-k}))$ being a normal projective variety, where $R(X_\infty, K_{X_\infty}^{-k})= \oplus_m H^0(X_\infty, K_{X_\infty}^{-mk})$. Without loss of generality, we can embed $X_i$ and $X_\infty$ in a fixed $\mathbb{P}^{N_k}$ using the $L^2$-orthonormal basis $ \{s^{(i)}_j \}_{j=0}^{N_k}$ of  $H^0(X_i, K_{X_i}^{-k})$ and $ \{s^{(\infty)}_j \}_{j=0}^{N_k}$ of $H^0(X_\infty, K_{X_\infty}^{-k})$ respectively. Let $\rho_{X_i, k}= \sum |s^{(i)}_j|^2_{h_i^k}$ and $\rho_{X_\infty, k}=\sum_j |s^{(\infty)}_j|^2_{h_\infty^k}$ be the Bergman kernals.

\begin{proposition} \label{logt} $X_\infty$ is a projective $\mathbb{Q}$-Fano variety with log terminal singularities. In particular, the algebraic singular set coincides with the singular set of $g_\infty$.

\end{proposition}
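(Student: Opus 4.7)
The plan is to adapt the arguments of sections 4.1--4.3 of \cite{DS} to the soliton setting, using the uniform $C^0$-bound on the Ricci potential $u$ from Proposition \ref{proposition} together with the weighted $L^2$-machinery of Proposition \ref{l2est} to compensate for the soliton weight $e^{-u}$ in the volume form. The $\mathbb{Q}$-Fano property is essentially free: the Donaldson-Sun-type machinery invoked in the paragraph preceding the proposition already identifies $X_\infty$ with $\mathrm{Proj}(R(X_\infty, K_{X_\infty}^{-k}))$ as a normal projective variety on which $K_{X_\infty}^{-k}$ is Cartier and very ample, so $K_{X_\infty}$ is $\mathbb{Q}$-Cartier and $-K_{X_\infty}$ is $\mathbb{Q}$-ample.

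For log terminality, I would fix a log resolution $\pi : \tilde X \to X_\infty$ with $K_{\tilde X} = \pi^* K_{X_\infty} + \sum a_i E_i$ and argue $a_i > -1$ for every exceptional divisor. The $C^\infty$ convergence on the regular part from Theorem \ref{TZ}, together with the uniform bound $|u| \leq C$, produces a limiting hermitian metric $h_\infty$ on $K_{X_\infty}^{-1}$ over $X_\infty \setminus \Sigma_{X_\infty}$ with $h_\infty^{-1} = e^{-u_\infty}\,\omega_\infty^n$ and $e^{-u_\infty}$ pinched between positive constants. Applying Theorem \ref{parc0} in the limit, the Bergman kernel $\rho_{X_\infty,k}$ is bounded below on the regular part, so the $L^2$-orthonormal basis of $H^0(X_\infty, K_{X_\infty}^{-k})$ trivializes $K_{X_\infty}^{-k}$ in a uniformly controlled way, and a power of $h_\infty$ extends continuously across $\Sigma_{X_\infty}$ as a metric on a Cartier bundle. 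In a local coordinate $z$ for an exceptional divisor $E_i \subset \tilde X$, this yields $\pi^*(h_\infty^{-1/k}) \sim |z|^{-2a_i/k}$ times a smooth volume form; the finite total mass $\int_{X_\infty} \omega_\infty^n < \infty$, transported through $\pi$, forces local $L^1$ integrability and hence $a_i > -1$.

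For the equivalence of algebraic and metric singular sets, one inclusion $X_\infty \setminus \Sigma_{X_\infty} \subset X_\infty^{\mathrm{reg}}$ is immediate from the smooth Kähler structure on the regular part asserted by Theorem \ref{TZ}(2). The reverse inclusion is where I expect the main technical obstacle: at an algebraically smooth point $p \in X_\infty^{\mathrm{reg}}$, one wants to upgrade Gromov-Hausdorff convergence to smooth convergence in a neighborhood of $p$ by producing uniform local holomorphic coordinates from peaked Bergman sections, as in the construction of \cite{DS}. The soliton correction forces this construction to be carried out with respect to the weighted inner product $\int |\cdot|^2 e^{-u} dV_g$, its weighted adjoint $\bar\partial_u^*$, and the weighted Laplacian $\Delta_{\bar\partial,u}^\sharp$; the uniform lower bound $\Delta_{\bar\partial,u}^\sharp \geq a$ in part (3) of Proposition \ref{l2est}, which absorbs the $\partial\bar\partial u$ term through the Bochner-Kodaira identity with weight $e^{-u}$, is precisely what replaces the unweighted Donaldson-Sun estimate and makes the peaked-section construction transfer. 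Once smooth convergence at $p$ is established, $g_\infty$ is smooth at $p$, placing $p$ in $X_\infty \setminus \Sigma_{X_\infty}$ and completing the proposition.
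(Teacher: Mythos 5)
Your proposal takes essentially the same route as the paper: the $\mathbb{Q}$-Fano property is read off from the Donaldson--Sun identification of $X_\infty$ with $\mathrm{Proj}(R(X_\infty,K_{X_\infty}^{-k}))$, and log terminality is obtained by pulling back the adapted volume measure $(s\wedge\bar s)^{-1/k}$ of a locally non-vanishing pluri-anticanonical section --- whose total mass is finite thanks to the Bergman-kernel lower bound from Theorem \ref{parc0} and the uniform volume bound --- to a resolution and reading off the discrepancies from local integrability. One correction: near an exceptional divisor $E_i$ the pulled-back adapted measure behaves like $|z|^{2a_i}$ times a smooth volume form (not $|z|^{-2a_i/k}$), so that $L^1$-integrability is precisely the condition $a_i>-1$; as literally written your exponent would yield $a_i<k$ instead, so the bookkeeping there needs to be fixed even though the underlying argument is the correct one.
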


\begin{proof} By Proposition \ref{l2est} and Theorem \ref{parc0}, $\log \rho_{X_\infty, k}$ is uniformly bounded. For any point $p\in X_\infty$, there exists a holomorphic section $s\in H^0(X_\infty, K_{X_\infty}^{-k})$ such that $s$ does not vanish on an open $U$ neighborhood with $\inf_U |s|\geq \epsilon$. Then $\Theta_s= (s\wedge \bar s)^{-1/k}$ is a volume measure and 
\begin{equation}
\int_{X_\infty \cap U} (s \wedge \bar s)^{-1/k} = \int_{X_\infty \cap U} |s|^{-2/k} dV_{g_{\infty}} \leq (\epsilon)^{-2/k} V.
\end{equation}
Let $\pi: \tilde X \rightarrow X_\infty$ be a resolution of singularities. Then $\pi^* \Theta_s$ is $L^1$-integrable on $\tilde X$, and since $\Theta_s$ can have only algebraic singularities, $\pi^*\Theta_s$ is $L^{1+\epsilon}$-integrable on $\tilde X$ for some $\epsilon>0$. This implies that $X_\infty$ has at worst log terminal singularities.

\qed
\end{proof}

\begin{proposition} The limiting variety $(X_\infty, g_\infty)$ arising from Proposition \ref{logt} solves the K\"ahler-Ricci soliton on $X_\infty$ in the following sense. 

\begin{enumerate}

\item $g_\infty$ is a global K\"ahler current on $X_\infty$ with bounded local K\"ahler potentials. 

\item  $g_\infty$ solves the K\"ahler-Ricci soliton equation on  $X_\infty^{reg}$
\begin{equation}
Ric(g_\infty)+ \nabla^2 u_\infty= g_\infty ,
\end{equation}
 for some smooth real valued potential function $u_\infty$  on $X_\infty^{reg}$. 

\item $\|u_\infty\|_{C^1(X_\infty^{reg})}< \infty$, and thus the holomorphic vector field $V_\infty= \uparrow \dbar u_\infty$ extends to a global holomorphic vector field on $X_\infty$ with $\|V_\infty\|_{L^\infty(X_\infty, g_\infty)}< \infty$. In particular, the Futaki invariant of $(X_\infty, g_\infty)$
can be bounded by $F$,
$$\mathcal{F}_{X_\infty}(V_\infty) =\int_{X_\infty} |V_\infty|^2 dV_{g_\infty} \leq F.$$

\end{enumerate}

\end{proposition}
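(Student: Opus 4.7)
The plan is to establish the three claims by combining the partial $C^0$ estimate of Theorem \ref{parc0} with the $C^\infty$ convergence on the regular part from Theorem \ref{TZ} and the global a priori bounds of Proposition \ref{proposition}.

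For (1), I would exploit the Kodaira embeddings $\Phi_i : X_i \hookrightarrow \mathbb{P}^{N_k}$ determined by the $L^2$-orthonormal basis of $H^0(X_i, K_{X_i}^{-k})$, which satisfy the standard identity
$$\Phi_i^* \omega_{FS} = k \omega_{g_i} + \ddbar \log \rho_{X_i, k}.$$
Proposition \ref{l2est}(1) gives a uniform upper bound on $\rho_{X_i,k}$, while Theorem \ref{parc0} gives a uniform positive lower bound, so $\log \rho_{X_i, k}$ is uniformly bounded in $L^\infty$. Since the embeddings converge to an embedding $\Phi_\infty$ of $X_\infty$, one obtains in the limit $k \omega_\infty = \Phi_\infty^* \omega_{FS} - \ddbar \log \rho_{X_\infty, k}$ on $X_\infty^{reg}$. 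Because the right-hand side extends as a closed positive $(1,1)$-current with bounded local potentials across $\Sigma_{X_\infty}$ (being the sum of a smooth form on $\mathbb{P}^{N_k}$ pulled back and $\ddbar$ of a bounded function), $g_\infty$ defines a global K\"ahler current on $X_\infty$ with bounded local potentials.

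For (2), I would use the $C^\infty$ convergence $g_i \to g_\infty$ on $X_\infty^{reg}$. From $Ric(g_i) = g_i - \ddbar u_i$ and the smooth convergence of $g_i$ and $Ric(g_i)$ on compact $K \Subset X_\infty^{reg}$, the sequence $\ddbar u_i$ converges smoothly on such $K$. Combined with the uniform $C^0$ and $C^1$ bounds on $u_i$ from Proposition \ref{proposition} and the fixed normalization $V^{-1} \int_{X_i} e^{-u_i} dV_{g_i} = 1$, a standard diagonal argument produces a limit $u_\infty \in C^\infty(X_\infty^{reg})$ with $u_i \to u_\infty$ in $C^\infty_{loc}$, yielding the soliton equation $Ric(g_\infty) + \ddbar u_\infty = g_\infty$ on the regular part. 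For (3), the uniform $C^1$ bound on $u_i$ passes to $u_\infty$, so $V_\infty = g_\infty^{i\bar j} \partial_{\bar j} u_\infty$ is a bounded holomorphic vector field on $X_\infty^{reg}$. Since $X_\infty$ is normal and $\Sigma_{X_\infty}$ has complex codimension $\geq 2$, $V_\infty$ extends to a global holomorphic vector field on $X_\infty$ by the Riemann extension theorem for normal varieties, retaining its $L^\infty$ bound. The Futaki invariant bound then follows from Fatou's lemma applied to the smooth convergence $|V_i|^2 \, dV_{g_i} \to |V_\infty|^2 \, dV_{g_\infty}$ on $X_\infty^{reg}$, using that $\Sigma_{X_\infty}$ has $g_\infty$-measure zero.

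The principal subtlety lies in item (1): one must ensure that the passage to the limit in the identity $k\omega_{g_i} = \Phi_i^* \omega_{FS} - \ddbar \log \rho_{X_i,k}$ holds in the sense of currents globally on $X_\infty$, and that the limiting current coincides with the Gromov-Hausdorff metric $g_\infty$ across $\Sigma_{X_\infty}$. This relies on the analysis of convergence of Kodaira embeddings carried out in section 4 of \cite{DS}, which can be transferred verbatim to the soliton setting once the partial $C^0$ estimate of Theorem \ref{parc0} is in hand.
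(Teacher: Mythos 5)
Your proposal is correct and follows essentially the same route as the paper: bounded local potentials via the two-sided Bergman kernel bounds and the Fubini--Study identity, the soliton equation on $X_\infty^{reg}$ via smooth convergence of $g_i$ together with the uniform $C^0$ and $C^1$ bounds on $u_i$ from Proposition \ref{proposition}, and extension of $V_\infty$ across the codimension-$\geq 2$ singular set by normality. The only differences are cosmetic (you invoke Fatou explicitly for the Futaki bound and defer the convergence of embeddings to \cite{DS}, exactly as the paper does implicitly).
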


\begin{proof}  We first prove that the local K\"ahler potentials of $g_\infty$ are uniformly bounded. Let  $$\omega_{FS, i} = k^{-1} \ddbar \log \sum_j s_j^{(i)}\wedge\overline{s_j^{(i)}}, ~\omega_{FS, \infty} = k^{-1} \ddbar \log \sum_j s_j^{(\infty)}\wedge \overline{s_j^{(\infty)}}$$
be the Fubini-Study metrics from the embeddings by $\{s_j^{(i)} \}_j$ and $\{s_j^{(\infty)}\}_j$.  Then 
$$\omega_{g_i} = \omega_{FS, i} + \ddbar \log \rho_{X_i, k}, ~\omega_{g_\infty} = \omega_{FS, \infty} + \ddbar \log \rho_{X_\infty, k}.$$
By Proposition \ref{l2est}, $\rho_{X_i, k}$ and $\rho_{X_\infty, k}$ are uniformly bounded in $L^\infty$. By the partial $C^0$ estimate, $\rho_{X_i, k}$ and $\rho_{X_\infty, k}$ are uniformly bounded below away from $0$. Therefore $\varphi_i = \log \rho_{k,i}$ and $\varphi_\infty = \log \rho_{X_\infty, k}$ are uniformly bounded in $L^\infty$. 

Note that the hermitian metric on $K_{X_i}^{-1} $ and $K_{X_\infty}^{-1}$ are given by $h_i= e^{-u_i}\omega_{g_i}^n$ and $h_\infty= e^{-u_\infty} \omega_{g_\infty}^n$. Since $u_i$ and $|\nabla u_i|_{g_i}$ are uniformly bounded, $u_i$ converges in $C^\alpha$ on $X_\infty^{reg}$ to $u_\infty$. From the smooth convergence of $g_i$ to $g_\infty$ on $X_\infty^{reg}$, $u_i$ converges in $C^\infty$ to $u_\infty$ on $X_\infty^{reg}$ with $|u_\infty|$ and $|\nabla u_\infty|$ uniformly bounded on $X_\infty^{reg}$. Furthermore, $g_\infty$ satisfies the soliton equation 
on $X_\infty^{reg}$
$$Ric(g_\infty) = g_\infty - \nabla^2 u_\infty= g_\infty+ L_{V_\infty} g_\infty,$$
where $(V_\infty)^i = -(g_\infty)^{i \bar j} (u_\infty)_{\bar j}$ is the holomorphic vector field on $X_\infty^{reg}$ induced by $u_\infty$. Since $X_\infty$ is normal, $V_\infty$ extends to a bounded global holomorphic vector field on $X_\infty$ with $||X_\infty||_{L^\infty(X_\infty, g_\infty)}<\infty$  and $\mathcal{F}_{X_\infty}(V_\infty) \leq F$. 
In fact, if we let $\Omega_{FS, \infty}$ be the smooth volume form on $X_\infty $ with $\ddbar \log \Omega_{FS, \infty} = - \omega_{FS, \infty}$, then $\varphi_{ \infty}$ satisfies a global Monge-Amp\`ere equation on $X_\infty$
\begin{equation}
(\omega_{FS, \infty} + \ddbar \varphi_{ \infty} )^n = e^{ - \varphi_\infty  + u_\infty} \Omega_{FS, \infty} \end{equation}
and on $X_\infty^{reg}$ (cf. \cite{Y1, EGZ}).

\qed
\end{proof}

\bigskip

\noindent {\bf{Acknowledgements:}} The authors would like to thank Xiaowei Wang, Ved Datar and Bin Guo for many valuable discussions.

\bigskip

\bigskip

{\noindent \footnotesize $^*$ Department of Mathematics\\
Columbia University, New York, NY 10027\\

\noindent $\dagger$ Department of Mathematics\\
Rutgers University, Piscataway, NJ 08854\\

\noindent $\ddagger$ Department of Mathematics\\
Rutgers University, Newark, NJ 07102\\ }

\end{document}